
\documentclass[12pt]{amsart}
\usepackage{amssymb,amsmath,amsfonts,latexsym,amscd}
\usepackage{bm}
\setlength{\textheight}{600pt} \setlength{\textwidth}{475pt}
\oddsidemargin -0mm \evensidemargin -0mm \topmargin -0pt

\newcommand{\be}{\begin{equation}}
\newcommand{\ee}{\end{equation}}
\newcommand{\bea}{\begin{eqnarray}}
\newcommand{\eea}{\end{eqnarray}}
\newcommand{\bean}{\begin{eqnarray*}}
\newcommand{\eean}{\end{eqnarray*}}
\newcommand{\brray}{\begin{array}}
\newcommand{\erray}{\end{array}}
\newcommand{\ben}{\begin{equation}{nonumber}}
\newcommand{\een}{\end{equation}{nonumber}}
\newcommand{\newsection}[1]{\setcounter{equation}{0} \setcounter{dfn}{0}
\section{#1}}
\newtheorem{dfn}{Definition}[section]
\newtheorem{thm}[dfn]{Theorem}
\newtheorem{lmma}[dfn]{Lemma}

\newtheorem{ppsn}[dfn]{Proposition}
\newtheorem{crlre}[dfn]{Corollary}
\newtheorem{xmpl}[dfn]{Example}
\newtheorem{rmrk}[dfn]{Remark}
\newcommand{\bdfn}{\begin{dfn}}
\newcommand{\bthm}{\begin{thm}}

\numberwithin{equation}{section}

\newcommand{\blr}{\begin{list}{$($\roman{cnt1}$)$} {\usecounter{cnt1}
        \setlength{\topsep}{0pt} \setlength{\itemsep}{0pt}}}
\newcommand{\bla}{\begin{list}{$($\alph{cnt2}$)$} {\usecounter{cnt2}
       \setlength{\topsep}{0pt} \setlength{\itemsep}{0pt}}}
\newcommand{\bln}{\begin{list}{$($\arabic{cnt3}$)$} {\usecounter{cnt3}
                \setlength{\topsep}{0pt} \setlength{\itemsep}{0pt}}}
\newcommand{\el}{\end{list}}

\newcommand{\blmma}{\begin{lmma}}
\newcommand{\bppsn}{\begin{ppsn}}
\newcommand{\bcrlre}{\begin{crlre}}
\newcommand{\bxmpl}{\begin{xmpl}}
\newcommand{\brmrk}{\begin{rmrk}}
\newcommand{\edfn}{\end{dfn}}
\newcommand{\ethm}{\end{thm}}
\newcommand{\elmma}{\end{lmma}}

\newcommand{\eppsn}{\end{ppsn}}
\newcommand{\ecrlre}{\end{crlre}}
\newcommand{\exmpl}{\end{xmpl}}
\newcommand{\ermrk}{\end{rmrk}}

\def\a*{{\cal A}_{h,*}}
\def\B{{\cal B}(h)}
\def\B1{{\cal B}_1(h)}
\def\b{{\cal B}^{\rm s.a.}(h)}
\def\b1{{\cal B}^{\rm s.a.}_1(h)}



\newcommand {\im}{\textup{i}}

\newcommand {\Exp} {\textup{e}}

\newcommand {\Tr}{\textup {Tr}}

\begin{document}

 \title{Helton-Howe-Carey-Pincus Trace Formula and Krein's Theorem}

\author[Chattopadhyay] {Arup Chattopadhyay $^{^{1)}}$ }
\address{
1) A. Chattopadhyay: Indian Institute of Technology Guwahati\\ Department of Mathematics
\\ Guwahati- 560059 \\Kamrup, Assam, India.}

\email{2003arupchattopadhyay@gmail.com, arupchatt@iitg.ernet.in}

\author[Sinha]{Kalyan B. Sinha $^{^{2)}}$}

\address{
2) K. B. Sinha: J.N.Centre for Advanced Scientific Research\\ and Indian Institute of Science,\\ 
 Bangalore\\ India.}

 \email{kbs@jncasr.ac.in}

\begin{abstract}
 In this article, we derive the 
 Helton-Howe-Carey-Pincus trace formula as a consequence of Krein's trace formula.
 \end{abstract}

\maketitle

{\textbf{Mathematics Subject Classification (2010):}} 47A13, 47A55, 47A56.
\vspace{0.1in}

{\textbf{Key words and Phrases:}} Trace formula, Perturbations of self-adjoint operators, 

Spectral integral.

%
%
%
%
%
%

\newsection{Introduction}\label{sec:1}

\noindent \textbf{Notation:} In the following, we shall use the notations given  below:
$\mathcal{H}$, $\mathcal{B}(\mathcal{H})$, $\mathcal{B}_{sa}(\mathcal{H})$,
$\mathcal{B}_1(\mathcal{H})$, $\mathcal{B}_{1+}(\mathcal{H})$, $\mathcal{B}_{1-}(\mathcal{H})$,
$\mathcal{B}_2(\mathcal{H})$,
$\mathcal{B}_p(\mathcal{H})$
denote a separable Hilbert space, set of bounded linear operators, set of bounded self-adjoint linear operators, 
set of trace class operators, set of positive
trace class operators, set of negative
trace class operators, set of Hilbert-Schmidt operators and Schatten-p class operators respectively
with $\|.\|_p$ as the associated Schatten-p norm. Furthermore by $\sigma(A)$, $E_A(\lambda)$, $D(A)$,
$\rho(A)$, we shall mean spectrum, spectral family, domain, resolvent set,
and resolvent of a self-adjoint operator $A$ respectively,
and $\textup{Tr}(A)$ will denote the trace of a trace class operator $A$ in $\mathcal{H}$.
Also we denote 
the set of natural numbers and the set of real numbers by $\mathbb{N}$ and $\mathbb{R}$ 
respectively. The set  $C(I)$ is
the Banach space of continuous
functions over a compact interval $I\subseteq \mathbb{R}$ with sup-norm $\|.\|_{\infty}$, 
and  $C^n(I)~(n\in \mathbb{N} \cup \{0\})$, the space of $n$-th continuously differentiable
functions over a compact interval $I$ with norm 
\[
 \|f\|^{n} = \sum_{j=0}^n\|f^{(j)}\|_{\infty} ~~~\textup{for}~~f\in C^n(I)
\]
and $f^{(j)}$ is the $j$-th derivative of $f$ (for $n=0$, $C^n(I)$ is $C(I)$),
and $L^p(\mathbb{R})$ the standard Lebesgue space. We shall denote
$f^{(1)}$, the first derivative, as $f'$.
Next we define the class $C_1^1(I)\subseteq C(I)$
as follows
\[
 C_1^1(I) = \Big\{f\in C(I): \|f\|_1^{1}=
 \frac{1}{\sqrt{2\pi}}\int_{\mathbb{R}}
 |\hat{f}(\alpha)|(1+|\alpha|)d\alpha<\infty\Big\},
\]
where $\hat{f}$ is the Fourier transform of $f$;
and it is easy to see that $C_1^1(I)\subseteq C^1(I)$ (since $\|.\|^{1}\leq \|.\|_1^{1}$); 
and denote the set all $f\in C_1^1(I)$ such that 
$f'\geq 0$ ($\leq 0$ respectively) by $C_{1+}^1(I)$ ($C_{1-}^1(I)$ respectively). Similarly
we denote the set of all polynomials with complex coefficients in $I$ by $\mathcal{P}(I)$
and the set of all $p\in \mathcal{P}(I)$ such that $p'\geq 0$ ($\leq 0$ respectively)
by $\mathcal{P}_{+}(I)$ ($\mathcal{P}_{-}(I)$ respectively).

Let $T\in \mathcal{B}(\mathcal{H})$ be a hyponormal operator, 
that is, $[T^*,T]\geq 0$. Set $T=X+\im Y$, where $X,Y\in \mathcal{B}_{sa}(\mathcal{H})$
and it is known that $\textup{Re}(\sigma(T))=\sigma(X)$, $\textup{Im}(\sigma(T))=\sigma(Y)$
\cite{putnam}, $[T^*,T]\in \mathcal{B}_1(\mathcal{H})$
if an additional assumption of finiteness
of spectral multiplicity is assumed \cite{bergershaw, kato,  martinputinar, putnam}.
A hyponormal operator $T$ is said to be
purely hyponormal if there exists no subspace $\mathcal{S}$ of $\mathcal{H}$ which is invariant
under $T$ such that the restriction of $T$ to $\mathcal{S}$ is normal. For a purely hyponormal
operator $T,$ it is also known that its real and imaginary parts, that is, $X$ and $Y$ are spectrally
absolutely continuous \cite{kato, putnam3}.

\textbf{\emph{(A):~The main assumption of the whole paper is 
that}} $T=X+\textup{i}Y$ \textbf{\emph{is a purely hyponormal operator in 
$\mathcal{B}(\mathcal{H})$ such that}} 
$[T^*,T]=-2\textup{i}[Y,X]= 2D^2\in \mathcal{B}_{1+}(\mathcal{H})$\textbf{\emph{
and $\sigma(X) \cup \sigma(Y)\subseteq [a,b]$
$\subseteq \mathbb{R}$.}}

\newsection{Main Section}
Let us start with few lemmas which will be useful to prove our main result. 
\begin{lmma}\label{lma1}
Let $T$ satisfy $\textbf{(A)}$.
Then for $\psi\in C_1^1([a,b])$ or $\mathcal{P}([a,b])$,
\begin{equation}
[\psi(Y),X]\in \mathcal{B}_1(\mathcal{H})
\quad \textup{and} \quad
-\im \textup{Tr}\{[\psi(Y),X]\} = \textup{Tr}\{\psi'(Y)D^2\}.
\end{equation}
Similarly, 
\begin{equation}\label{useq35}
[Y, \psi(X)]\in \mathcal{B}_1(\mathcal{H})
\quad \textup{and} \quad
-\im \textup{Tr}\{[Y,\psi(X)]\} = \textup{Tr}\{\psi'(X)D^2\}.
\end{equation}
\end{lmma}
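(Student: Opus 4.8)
The plan is to prove the first pair of claims; the second pair follows by the symmetric roles of $X$ and $Y$ (note $[T^*,T]=2D^2$ is symmetric in the sense that $-2\im[Y,X]=2D^2$ means $-2\im[X,Y]=-2D^2$, and the purely-hyponormal hypothesis applies to both parts). First I would reduce to the case $\psi(y)=e^{isy}$ using the Fourier representation available for $\psi\in C_1^1([a,b])$; the polynomial case either follows by approximation or is handled directly by a telescoping/Leibniz argument on $[Y^n,X]=\sum_{k=0}^{n-1}Y^k[Y,X]Y^{n-1-k}$, which immediately gives membership in $\mathcal{B}_1(\mathcal{H})$ since $[Y,X]=\im D^2\in\mathcal{B}_1(\mathcal{H})$ and each $Y^j$ is bounded, together with the trace identity $\Tr\{[Y^n,X]\}=\Tr\{\sum_k Y^{n-1}[Y,X]\}=n\,\Tr\{Y^{n-1}[Y,X]\}$ by cyclicity — exactly $\im\,\Tr\{\psi'(Y)D^2\}$ when $\psi(y)=y^n$.

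For the transcendental case, I would write $[e^{isY},X]=\int_0^s \frac{d}{dr}\big(e^{irY}Xe^{i(s-r)Y}\big)\,dr = \im\int_0^s e^{irY}[Y,X]e^{i(s-r)Y}\,dr$, a Bochner integral in $\mathcal{B}_1(\mathcal{H})$ since $[Y,X]\in\mathcal{B}_1(\mathcal{H})$ and the flanking unitaries have norm $1$; this already establishes $[e^{isY},X]\in\mathcal{B}_1(\mathcal{H})$ with $\|[e^{isY},X]\|_1\le |s|\,\|D^2\|_1$. Taking the trace and using cyclicity of $\Tr$ on the integrand, $\Tr\{e^{irY}[Y,X]e^{i(s-r)Y}\}=\Tr\{e^{isY}[Y,X]\}$ is independent of $r$, so $\Tr\{[e^{isY},X]\}=\im s\,\Tr\{e^{isY}[Y,X]\} = \im\,\Tr\{(is e^{isY})\,(\im D^2)\}\cdot(-\im)$... more cleanly: $-\im\,\Tr\{[e^{isY},X]\}= s\,\Tr\{e^{isY}[Y,X]\}\cdot\im\cdot(-\im)$; I would just track constants carefully to land on $-\im\,\Tr\{[e^{isY},X]\}=\Tr\{(\tfrac{d}{dy}e^{isy})|_{y=Y}\,D^2\}$. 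Then integrating against $\frac{1}{\sqrt{2\pi}}\hat\psi(s)\,ds$ and using $\int|\hat\psi(s)|(1+|s|)\,ds<\infty$ to justify interchanging the trace (a bounded linear functional on $\mathcal{B}_1$) with the integral, and to control $\|\psi'(Y)\|$ and $\|[\psi(Y),X]\|_1\le \big(\tfrac{1}{\sqrt{2\pi}}\int|s||\hat\psi(s)|ds\big)\|D^2\|_1$, yields both assertions.

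The main obstacle I anticipate is purely technical rather than conceptual: justifying that $[Y,X]=\im D^2$ genuinely lies in $\mathcal{B}_1(\mathcal{H})$ in the form needed (this is part of assumption $\textbf{(A)}$, so it is given), and rigorously justifying the Bochner-integral manipulations — in particular that $r\mapsto e^{irY}[Y,X]e^{i(s-r)Y}$ is continuous in $\|\cdot\|_1$ (it is, since multiplication by a fixed trace-class operator is $\|\cdot\|_1$-continuous and $r\mapsto e^{irY}$ is strongly continuous, and strong convergence on one side of a fixed trace-class operator upgrades to trace-norm convergence) and that Fubini/Tonelli applies for the double integral over $(r,s)$. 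A secondary point to handle with care is the passage between the polynomial statement and the $C_1^1$ statement: rather than approximating, I would simply run the same Fourier argument, noting that polynomials on the compact interval $[a,b]$ can be replaced by any $C^2$ extension with compactly supported, integrable Fourier transform, or argue the polynomial case self-containedly via the Leibniz expansion above — whichever keeps the constants transparent. Once the constants are pinned down, both displayed identities and both trace-class memberships drop out simultaneously.
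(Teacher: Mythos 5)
Your proposal is correct and follows essentially the same route as the paper: the Fourier representation of $\psi$ combined with the Duhamel identity $[\Exp^{\im s Y},X]=\im\int_0^s \Exp^{\im rY}[Y,X]\Exp^{\im(s-r)Y}\,dr$ for the $C_1^1$ case, the telescoping expansion of $[Y^j,X]$ for polynomials, cyclicity of the trace to collapse the inner integral/sum, and the interchange of $X$ and $Y$ (with the compensating sign) for the second pair of identities. Your constant-tracking, including $[Y,X]=\im D^2$ and the final identification with $\Tr\{\psi'(Y)D^2\}$, checks out against the paper's computation.
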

\begin{proof}
Now for $\psi\in C_1^1([a,b])$ we have
\begin{equation}\label{useq80}
 \begin{split}
  &[\psi(Y),X] = \frac{1}{\sqrt{2\pi}} \int_{\mathbb{R}}\hat{\psi}(\alpha)[\Exp^{\im \alpha Y},X]d\alpha
  = \frac{1}{\sqrt{2\pi}} 
  \int_{\mathbb{R}}\hat{\psi}(\alpha)\left(\Exp^{\im \alpha Y}X-X\Exp^{\im \alpha Y}\right)d\alpha\\
 & = \frac{1}{\sqrt{2\pi}} \int_{\mathbb{R}}\hat{\psi}(\alpha)d\alpha \int_0^{\alpha} 
 \im \Exp^{\im (\alpha-\beta)Y}[Y,X]\Exp^{\im \beta Y} d\beta= - \frac{1}{\sqrt{2\pi}}
 \int_{\mathbb{R}}\hat{\psi}(\alpha)d\alpha \int_0^{\alpha} 
 \Exp^{\im (\alpha-\beta)Y}D^2\Exp^{\im \beta Y}d\beta.
 \end{split}
\end{equation}
Since $D^2\in \mathcal{B}_1(\mathcal{H})$ and 
$\int_{\mathbb{R}}|\hat{\psi}(\alpha)||\alpha|d\alpha < \infty,$ then
from the above equation \eqref{useq80} we conclude that
\begin{equation*}
 [\psi(Y),X]\in \mathcal{B}_1(\mathcal{H})~~~\text{and}~~~
 \|[\psi(Y),X]\|_1\leq  \|D^2\|_1 \frac{1}{\sqrt{2\pi}} \int_{\mathbb{R}} |\hat{\psi}(\alpha)||\alpha|d\alpha.
\end{equation*}
Moreover,
\begin{equation*}
\begin{split}
& -\im \Tr\{[\psi(Y),X]\} 
 =  \frac{\im}{\sqrt{2\pi}}
 \int_{\mathbb{R}}\hat{\psi}(\alpha)d\alpha \int_0^{\alpha} 
 \Tr\{\Exp^{\im \alpha Y}D^2\}d\beta = \frac{\im}{\sqrt{2\pi}}
 \int_{\mathbb{R}}\alpha \hat{\psi}(\alpha)d\alpha
 \Tr\{\Exp^{\im \alpha Y}D^2\}\\
 & =   \Tr\{\frac{1}{\sqrt{2\pi}} \int_{\mathbb{R}}\im \alpha \hat{\psi}(\alpha)
 \Exp^{\im \alpha Y} d\alpha ~~D^2\} =  \Tr\{\psi'(Y)D^2\},
\end{split}
\end{equation*}
where we have used the cyclicity of trace and the fact that
\begin{equation*}
 \psi'(\beta) = \frac{1}{\sqrt{2\pi}}\int_{\mathbb{R}}\im \alpha \hat{\psi}(\alpha)
 \Exp^{\im \alpha \beta} d\alpha. 
\end{equation*}
Next for $\psi(t)=\sum\limits_{j=0}^nc_jt^j\in \mathcal{P}([a,b])$ we get
\begin{equation}\label{useq95}
 \begin{split}
  [\psi(Y),X] = \sum_{j=0}^nc_j[Y^j,X]
  = \sum_{j=0}^nc_j \sum_{k=0}^{j-1}Y^{j-k-1}[Y,X]Y^j
  = \im  \sum_{j=0}^nc_j \sum_{k=0}^{j-1}Y^{j-k-1}D^2 Y^k.
 \end{split}
\end{equation}
Since $D^2\in \mathcal{B}_1(\mathcal{H}),$ then
from the above equation \eqref{useq95} we conclude that
\begin{equation*}
 [\psi(Y),X]\in \mathcal{B}_1(\mathcal{H})
 \quad \text{and} \quad \left\|[\psi(Y),X]\right\|_1
 \leq \|D^2\|_1\left(\sum_{j=0}^n j |c_j| \|Y\|^{j-1}\right).
\end{equation*}
Furthermore,
\begin{equation*}
 \begin{split}
& -\im \Tr\{[\psi(Y),X]\}
  =  \sum_{j=0}^nc_j \sum_{k=0}^{j-1}\Tr\{Y^{j-k-1}D^2 Y^k\}
  =  \sum_{j=0}^nc_j \sum_{k=0}^{j-1}\Tr\{Y^{j-1}D^2 \}\\
& =  \sum_{j=0}^nj c_j \Tr\{Y^{j-1}D^2 \}
=  \Tr\{\sum_{j=0}^nj c_jY^{j-1}D^2\}=  \Tr\{\psi'(Y)D^2\},
 \end{split}
\end{equation*}
where we have used the cyclicity of property of trace.
By interchanging the role of $X$ and $Y$ in the above calculations, we conclude that
\begin{equation*}
[Y, \psi(X)]\in \mathcal{B}_1(\mathcal{H})
\quad \textup{and} \quad
-\im \textup{Tr}\{[Y,\psi(X)]\} = \textup{Tr}\{\psi'(X)D^2\}.
\end{equation*}

This completes the proof.
\end{proof}

\begin{lmma}\label{lmma2}
Let $T$ satisfy $\textbf{(A)}$. Then
 $-\im[\psi(Y),X]$ $\in \mathcal{B}_{1\pm}(\mathcal{H})$ according as  
 $\psi\in C_{1\pm}^1([a,b])$ or $\mathcal{P}_{\pm}([a,b])$
 respectively. Similarly, $-\im[Y,\psi(X)]$ $\in \mathcal{B}_{1\pm}(\mathcal{H})$
 according as $\psi\in C_{1\pm}^1([a,b])$ or $\mathcal{P}_{\pm}([a,b])$ respectively. 
\end{lmma}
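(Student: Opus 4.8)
The plan is to build on Lemma~\ref{lma1}. That lemma already furnishes $-\im[\psi(Y),X]$ and $-\im[Y,\psi(X)]$ as elements of $\mathcal{B}_1(\mathcal{H})$, and since $\psi'\ge 0$ (resp.\ $\le 0$) makes $\psi$ real-valued on $[a,b]$ up to an additive constant, $\psi(Y)$ is self-adjoint up to a scalar and each commutator is skew-adjoint; so the entire content of the lemma is the determination of the sign. I would carry out the argument for $-\im[\psi(Y),X]$, then recover the statement for $-\im[Y,\psi(X)]$ by interchanging the roles of $X$ and $Y$ exactly as in Lemma~\ref{lma1}, and obtain the $\mathcal{B}_{1-}(\mathcal{H})$ assertions by replacing $\psi$ with $-\psi$.

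The first step is to put the commutator into the ``integrated'' form already isolated inside the proof of Lemma~\ref{lma1}. For $\psi\in C_1^1([a,b])$, the substitution $\beta=s\alpha$ in \eqref{useq80}, together with $\im\alpha\hat{\psi}(\alpha)=\widehat{\psi'}(\alpha)$ and the spectral theorem for $Y$, yields
\begin{align*}
-\im[\psi(Y),X]&=\int_0^1\Big(\frac{1}{\sqrt{2\pi}}\int_{\mathbb{R}}\widehat{\psi'}(\alpha)\,\Exp^{\im(1-s)\alpha Y}D^2\Exp^{\im s\alpha Y}\,d\alpha\Big)ds\\
&=\int_0^1\int\int \psi'\big((1-s)\lambda+s\mu\big)\,dE_Y(\lambda)\,D^2\,dE_Y(\mu)\,ds,
\end{align*}
and the polynomial computation \eqref{useq95} rearranges (via the geometric-sum formula for divided differences of monomials) into the same expression with $\psi'(t)=\sum_j jc_jt^{j-1}$. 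Equivalently, $-\im[\psi(Y),X]=-\frac{d}{dt}\big|_{t=0}\Exp^{\im t\psi(Y)}X\Exp^{-\im t\psi(Y)}$, a formula I will also use below.

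The second, and decisive, step is to show that the operator on the right is $\ge 0$ whenever $\psi'\ge 0$ on $[a,b]$. I want to stress that this is \emph{not} a consequence of the naive remark ``$\psi'\ge 0$ and $D^2\ge 0$, hence the integrand is positive'': a double operator integral $\int\int k(\lambda,\mu)\,dE_Y(\lambda)\,D^2\,dE_Y(\mu)$ with $D^2\ge 0$ need not be a positive operator just because the kernel $k$ is pointwise nonnegative; it is when $k$ is a positive-\emph{definite} kernel, which for the divided difference $k=\psi^{[1]}$ amounts, by Loewner's theorem, essentially to $\psi$ being operator monotone, far stronger than monotone. So the pure hyponormality of $T$ must enter here in an essential way. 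The route I would pursue: introduce the unitary family $X_t:=\Exp^{\im t\psi(Y)}X\Exp^{-\im t\psi(Y)}$; since $\psi(Y)$ commutes with $Y$, $T_t:=X_t+\im Y=\Exp^{\im t\psi(Y)}T\Exp^{-\im t\psi(Y)}$ is again purely hyponormal with $[T_t^*,T_t]=2\big(\Exp^{\im t\psi(Y)}D\Exp^{-\im t\psi(Y)}\big)^2=:2D_t^2\ge 0$, and $\frac{d}{dt}X_t=\im[\psi(Y),X_t]$. Thus the asserted positivity of $-\im[\psi(Y),X]$ is exactly the statement that $t\mapsto X_t$ is nonincreasing, and I would try to extract this monotonicity from Krein's apparatus applied to the trace class path $X_t-X_0$, using the sign datum $\Tr\{\frac{d}{dt}X_t\}=-\Tr\{\psi'(Y)D_t^2\}\le 0$ supplied by Lemma~\ref{lma1} (applied to $T_t$, which also satisfies $\textbf{(A)}$).

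The main obstacle is precisely this passage from the easy nonnegativity of the \emph{trace} $\Tr\{\psi'(Y)D^2\}$ to nonnegativity of the \emph{operator} $-\im[\psi(Y),X]$: everything else is bookkeeping, but here the hyponormal structure has to be invoked in a genuinely non-formal manner, and I expect this to be the step demanding the most care.
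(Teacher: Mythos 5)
Your reduction of the commutator to the double--operator--integral form
\begin{equation*}
-\im[\psi(Y),X]\;=\;\int_{[a,b]^2}\tilde{\psi}(t,t')\,E^{(Y)}(dt)\,D^2\,E^{(Y)}(dt'),\qquad
\tilde{\psi}(t,t')=\frac{\psi(t')-\psi(t)}{t'-t},
\end{equation*}
is exactly the paper's route (equations \eqref{useq15} and \eqref{useq51}), and your refusal to conclude positivity from ``$\tilde{\psi}\ge 0$ pointwise and $D^2\ge 0$'' is mathematically correct: a double operator integral with a pointwise nonnegative symbol preserves positivity only when the symbol is a positive \emph{definite} kernel (Schur--multiplier positivity), which for a divided difference is Loewner's operator monotonicity, far stronger than $\psi'\ge 0$. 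You should be aware, however, that this rejected inference is precisely the one the paper makes: its proof passes from \eqref{useq15} directly to $-\im[\psi(Y),X]\ge 0$. So you have correctly isolated the only nontrivial step of the lemma.

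The difficulty is that your proposal does not supply that step, and the mechanism you sketch for it cannot work. Krein's theorem and Lemma \ref{lma1} are trace--level statements; the datum $\Tr\{\frac{d}{dt}X_t\}=-\Tr\{\psi'(Y)D_t^2\}\le 0$ says nothing about the operator order of $\frac{d}{dt}X_t=\im[\psi(Y),X_t]$, since an operator with nonpositive trace need not be nonpositive. Worse, the operator inequality you are trying to prove appears to be false under \textbf{(A)} alone: take $\mathcal{H}=L^2([0,2])$, $Y=M_t$ multiplication by $t$, and $X=C$ the self-adjoint Cauchy singular integral $(Cf)(x)=\frac{1}{\pi\im}\,\mathrm{p.v.}\int_0^2\frac{f(s)}{s-x}\,ds$. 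Then $T=X+\im Y$ is purely hyponormal with $D^2=-\im[Y,X]=\frac{1}{\pi}\,\mathbf{1}\otimes\mathbf{1}$, yet for $\psi(t)=(t+c)^2\in\mathcal{P}_+([a,b])$ (with $c$ large enough that $\psi'\ge 0$ on $[a,b]\supseteq\sigma(X)\cup\sigma(Y)$) one computes $-\im[\psi(Y),X]=\frac{1}{\pi}\left(m\otimes\mathbf{1}+\mathbf{1}\otimes m+2c\,\mathbf{1}\otimes\mathbf{1}\right)$ with $m(t)=t$, whose quadratic form $\frac{2}{\pi}\left(\mathrm{Re}\,\langle\mathbf{1},f\rangle\overline{\langle m,f\rangle}+c|\langle\mathbf{1},f\rangle|^2\right)$ is negative for $f$ with $\int f$ small and $\int tf$ large and negative (the trace is still positive, consistent with Lemma \ref{lma1}). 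So the gap is not a matter of ``more care'': no argument can deliver the operator inequality as stated, and any repair must retreat to the trace--level positivity $\Tr\{-\im[\psi(Y),\phi(X)]\}\ge 0$ (which is what the subsequent theorems actually consume), established by genuinely different means such as the Carey--Pincus phase--shift machinery.
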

\begin{proof}
Let $\psi\in C_1^1([a,b]).$ Then from equation \eqref{useq80} in lemma~\ref{lma1}, we have
\[
 -\im [\psi(Y),X] = \frac{1}{\sqrt{2\pi}} \int_{\mathbb{R}}\im \hat{\psi}(\alpha)d\alpha \int_0^{\alpha} 
 \Exp^{\im (\alpha-\beta)Y}D^2\Exp^{\im \beta Y}d\beta.
\]
Next by the spectral theorem for $Y$ we get
\begin{equation*}
 \begin{split}
& -\im [\psi(Y),X]
 = \frac{1}{\sqrt{2\pi}} \int_{\mathbb{R}} \im \hat{\psi}(\alpha) d\alpha
\int_0^{\alpha}
d\beta\int_a^b \int_a^b \Exp^{\im(\alpha-\beta) t}\Exp^{\im\beta t'}
E^{(Y)}(dt)D^2E^{(Y)}(dt'),
\end{split}
\end{equation*}
where $E^{(Y)}(.)$ is the spectral
family of the self-adjoint operator $Y$. 
Note that $\mathcal{E}(\Delta \times \delta) (S) \equiv E^{(Y)}(\Delta)SE^{(Y)}(\delta)$
($S\in \mathcal{B}_2(\mathcal{H})$ and $\Delta \times \delta \subseteq 
\mathbb{R}\times \mathbb{R}$) extends to a spectral measure (finite) on $\mathbb{R}^2$
in the Hilbert space $\mathcal{B}_2(\mathcal{H})$. Therefore by Fubini's theorem 
\begin{equation}\label{useq15}
 \begin{split}
&  -\im [\psi(Y),X] = \frac{1}{\sqrt{2\pi}} \int_{\mathbb{R}} \im \hat{\psi}(\alpha) d\alpha
\int_a^b \int_a^b \Exp^{\im\alpha t} \int_0^{\alpha}
d\beta~\Exp^{-\im \beta (t-t')}
E^{(Y)}(dt)D^2E^{(Y)}(dt') \\
& = \frac{1}{\sqrt{2\pi}} \int_{\mathbb{R}} \im \hat{\psi}(\alpha) d\alpha
\int_a^b \int_a^b \frac{\Exp^{\im \alpha t'}-\Exp^{\im\alpha t} }{\im(t'-t)}
E^{(Y)}(dt)D^2E^{(Y)}(dt') \}\\
& = \frac{1}{\sqrt{2\pi}}
\int_a^b \int_a^b
\frac{\psi(t')
-\psi(t) }{t'-t}
E^{(Y)}(dt)D^2E^{(Y)}(dt') \}
= \frac{1}{\sqrt{2\pi}} \int_{[a,b]^2} \tilde{\psi}(t,t')
\mathcal{E}(dt\times dt')(D^2),
\end{split}
\end{equation}
where
\begin{equation*}
\tilde{\psi}(t,t')= 
\begin{cases}
\frac{\psi(t')
-\psi(t) }{t'-t} , &\text{if $t \neq t'$,}\\
\psi'(t), ~&\text{if $t = t'$.}
\end{cases}
\end{equation*}
Note that for $\psi \in C^1_{1\pm}([a,b])$,
we have 
\[
\tilde{\psi}(t,t') \geq 0 \quad \text{(or}~ \leq 0) \quad \text{respectively for} \quad t,t'\in [a,b]
\]
and hence from the equation \eqref{useq15} we conclude that $-\im [\psi(Y),X]\geq 0$ or
$-\im [\psi(Y),X]\leq 0$ accordingly. By repeating the above calculations 
with $X$ and $Y$ interchanged we get that
\begin{equation}\label{useq52}
 -\im [Y,\psi(X)]= \frac{1}{\sqrt{2\pi}} \int_{\mathbb{R}}\im \hat{\psi}(\alpha)d\alpha \int_0^{\alpha} 
 \Exp^{\im (\alpha-\beta)X}D^2\Exp^{\im \beta X}d\beta = \frac{1}{\sqrt{2\pi}} 
 \int_{[a,b]^2}\tilde{\psi}(t,t') \tilde{\mathcal{E}}(dt\times dt')(D^2),
\end{equation}
where $\tilde{\mathcal{E}}(\Delta \times \delta) (S) = E^{(X)}(\Delta)SE^{(X)}(\delta)$
($S\in \mathcal{B}_2(\mathcal{H})$ and $\Delta \times \delta \subseteq 
\mathbb{R}\times \mathbb{R}$) extends to a spectral measure on $\mathbb{R}^2$
in the Hilbert space $\mathcal{B}_2(\mathcal{H})$ and $E^{(X)}(.)$ is the spectral
family of the self-adjoint operator $X$. Therefore as above we conclude that 
$-\im [Y,\psi(X)]\geq 0$ or $\leq 0$ if $\psi \in C^1_{1+}([a,b])$ or $C^1_{1-}([a,b])$
respectively. 

Next assume that $\psi(t)=\sum_{j=0}^nc_jt^j\in \mathcal{P}([a,b])$.
Then from equation \eqref{useq95} in lemma~\ref{lma1}, we get
\begin{equation*}
-\im [\psi(Y),X]
  =  \sum_{j=0}^nc_j \sum_{k=0}^{j-1}Y^{j-k-1}D^2 Y^k.
\end{equation*}
Thus by using spectral theorem for $Y$ and Fubini's theorem we get
\begin{equation}\label{useq51}
 \begin{split}
& -\im [\psi(Y),X]
  =  \sum_{j=0}^nc_j \sum_{k=0}^{j-1}Y^{j-k-1}D^2 Y^k
  = \sum_{j=0}^nc_j \sum_{k=0}^{j-1} \int_a^b \int_a^b t^{j-k-1}t'^k
E^{(Y)}(dt)D^2E^{(Y)}(dt')\\
&  = \sum_{j=0}^nc_j \int_a^b \int_a^b \frac{t'^j-t^j}{t'-t}
E^{(Y)}(dt)D^2E^{(Y)}(dt')
= \int_{[a,b]^2} \tilde{\psi}(t,t')
\mathcal{E}(dt\times dt')(D^2),
 \end{split}
\end{equation}
where $E^{(Y)}(.)$ is the spectral
family of the self-adjoint operator $Y$ and $\tilde{\psi}(t,t')$, $\mathcal{E}(dt\times dt')$
as in equation \eqref{useq15}. Note that 
\[
 \tilde{\psi}(t,t')\geq 0 (~\text{or}~\leq 0) \quad \text{according as} \quad \psi\in \mathcal{P}_+
 (~\text{or}~ \mathcal{P}_-)
\]
and hence from \eqref{useq51} we conclude that 
\[
 -\im [\psi(Y),X] \geq 0 (~\text{or}~\leq 0) \quad \text{according as} \quad \psi\in \mathcal{P}_+
 (~\text{or}~ \mathcal{P}_-).
\]
By repeating the above calculations with $X$ and $Y$ interchanged we get that
\[
 -\im [Y,\psi(X)]= 
 \int_{[a,b]^2}\tilde{\psi}(t,t') \tilde{\mathcal{E}}(dt\times dt')(D^2),
\]
where $\tilde{\mathcal{E}}(dt\times dt')$ as in \eqref{useq52}. Therefore as above we conclude that
\[
 -\im [Y,\psi(X)] \geq 0 (~\text{or}~\leq 0) \quad \text{according as} \quad \psi\in \mathcal{P}_+
 (~\text{or}~ \mathcal{P}_-).
\]
This completes the proof.
\end{proof}

\begin{lmma}\label{lmma6}
Let $T$ satisfy $\textbf{(A)}.$ Then $-\im [\psi(Y),\phi(X)]\in \mathcal{B}_{1\pm}(\mathcal{H})$
according as $\psi$ and $\phi$ $\in C_{1\pm}^1([a,b])$ or $\mathcal{P}_{\pm}([a,b])$ respectively.
\end{lmma}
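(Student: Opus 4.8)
The plan is to reduce the double commutator $[\psi(Y),\phi(X)]$ to a single commutator and then apply Lemma~\ref{lmma2}, which already handles $-\im[Y,\phi(X)]$ and $-\im[\psi(Y),X]$. First I would treat the polynomial case. Write $\psi(t)=\sum_{j=0}^n c_j t^j$ with $c_j \geq 0$ (the case $\psi \in \mathcal{P}_+$), so that
\[
 [\psi(Y),\phi(X)] = \sum_{j=1}^n c_j [Y^j,\phi(X)] = \sum_{j=1}^n c_j \sum_{k=0}^{j-1} Y^{j-k-1}\,[Y,\phi(X)]\,Y^k.
\]
Multiplying through by $-\im$ and using that $-\im[Y,\phi(X)] = D_\phi \geq 0$ is trace class by Lemma~\ref{lmma2} (when $\phi \in \mathcal{P}_+$ or $C^1_{1+}$), each summand $Y^{j-k-1} D_\phi Y^k$ is not individually positive, but its symmetrized companion is: grouping the terms $Y^{j-k-1}D_\phi Y^k + Y^k D_\phi Y^{j-k-1}$ is still not obviously positive either. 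So instead I would use the spectral-integral representation: by the spectral theorem for $Y$ (and Fubini on the finite $\mathcal{B}_2(\mathcal{H})$-valued spectral measure $\mathcal{E}$, exactly as in \eqref{useq51}),
\[
 -\im[\psi(Y),\phi(X)] = \sum_{j=1}^n c_j \int_a^b\int_a^b \frac{t'^{\,j}-t^{\,j}}{t'-t}\, E^{(Y)}(dt)\,D_\phi\,E^{(Y)}(dt') = \int_{[a,b]^2} \tilde{\psi}(t,t')\, \mathcal{E}(dt\times dt')(D_\phi),
\]
where $D_\phi = -\im[Y,\phi(X)] \in \mathcal{B}_{1+}(\mathcal{H})$ by Lemma~\ref{lmma2}. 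Since $\tilde\psi(t,t') \geq 0$ on $[a,b]^2$ when $\psi \in \mathcal{P}_+$, and $\mathcal{E}(dt\times dt')(D_\phi)$ is a positive $\mathcal{B}_2(\mathcal{H})$-valued measure applied to a positive operator, the integral is a positive operator; trace-class membership follows since $D_\phi \in \mathcal{B}_1(\mathcal{H})$ and $\tilde\psi$ is bounded. The signs $\mathcal{P}_-$ (and the mixed cases, where one of $\psi,\phi$ is increasing and the other decreasing — though the statement only claims the matched-sign case) are handled by the obvious sign bookkeeping.

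For the $C^1_{1\pm}$ case I would run the same argument starting from the Fourier representation. Writing $\psi(Y) = \frac{1}{\sqrt{2\pi}}\int_{\mathbb{R}} \hat\psi(\alpha) \Exp^{\im\alpha Y}\,d\alpha$ and expanding $[\Exp^{\im\alpha Y},\phi(X)]$ as in \eqref{useq80}, one gets
\[
 -\im[\psi(Y),\phi(X)] = \frac{1}{\sqrt{2\pi}} \int_{\mathbb{R}} \im\hat\psi(\alpha)\,d\alpha \int_0^\alpha \Exp^{\im(\alpha-\beta)Y}\big(-\im[Y,\phi(X)]\big)\Exp^{\im\beta Y}\,d\beta,
\]
and the key point is that $-\im[Y,\phi(X)] = D_\phi$ is already positive trace class by Lemma~\ref{lmma2} — so this is structurally identical to \eqref{useq80}/\eqref{useq15} but with $D^2$ replaced by $D_\phi$. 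Applying the spectral theorem for $Y$ and Fubini exactly as in the derivation of \eqref{useq15} gives
\[
 -\im[\psi(Y),\phi(X)] = \frac{1}{\sqrt{2\pi}} \int_{[a,b]^2} \tilde\psi(t,t')\, \mathcal{E}(dt\times dt')(D_\phi),
\]
and again $\tilde\psi \geq 0$ (resp. $\leq 0$) for $\psi \in C^1_{1+}$ (resp. $C^1_{1-}$) forces the desired sign, with trace-class membership from $\|{-\im[\psi(Y),\phi(X)]}\|_1 \leq \|D_\phi\|_1 \cdot \frac{1}{\sqrt{2\pi}}\int_{\mathbb{R}} |\hat\psi(\alpha)||\alpha|\,d\alpha$.

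The main obstacle — really the only nontrivial point — is justifying the interchange of the $\mathcal{B}_2(\mathcal{H})$-valued integrals and the passage from the iterated $\beta$-integral to the spectral representation when the inner operator is $D_\phi$ rather than $D^2$. This is handled exactly as in Lemma~\ref{lmma2}: $\mathcal{E}(\Delta\times\delta)(S) = E^{(Y)}(\Delta)SE^{(Y)}(\delta)$ extends to a finite spectral measure on $\mathbb{R}^2$ acting on the Hilbert space $\mathcal{B}_2(\mathcal{H})$, $D_\phi \in \mathcal{B}_1(\mathcal{H}) \subseteq \mathcal{B}_2(\mathcal{H})$, and $\int_{\mathbb{R}}|\hat\psi(\alpha)|(1+|\alpha|)\,d\alpha < \infty$ by hypothesis, so Fubini applies. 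One should note that $D_\phi$ being trace-class (not merely Hilbert–Schmidt) is what upgrades the conclusion from $\mathcal{B}_2$ to $\mathcal{B}_{1\pm}$; this is precisely the content borrowed from Lemma~\ref{lmma2}. With that in hand the proof is a direct transcription of the arguments for Lemmas~\ref{lma1} and~\ref{lmma2}, now iterated once.
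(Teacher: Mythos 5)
Your proof is correct and follows essentially the same route as the paper's: both express $-\im[\psi(Y),\phi(X)]$ as the double operator integral of the divided-difference kernel $\tilde{\psi}$ against $-\im[Y,\phi(X)]$, invoke Lemma~\ref{lmma2} for the positivity and trace-class membership of that inner commutator, and read off the sign of the result from the pointwise sign of $\tilde{\psi}$, exactly as in \eqref{useq54}--\eqref{useq55}. The only difference is cosmetic: you write out the Fubini justification and the $\|\cdot\|_1$ bound explicitly, where the paper simply refers back to the computations in \eqref{useq15} and \eqref{useq51}.
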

\begin{proof}
Let  $\psi, \phi \in C_{1}^1([a,b])$ or $\mathcal{P}([a,b])$. Then by repeating the same calculations as in \eqref{useq15} 
and in \eqref{useq51}  of the above lemma~\ref{lmma2} we get,
\begin{equation}\label{useq54}
 \begin{split}
&  -\im [\psi(Y),\phi(X)] 
= \frac{1}{\sqrt{2\pi}}
\int_a^b \int_a^b
\frac{\psi(t')
-\psi(t) }{t'-t}
E^{(Y)}(dt)(-\im[Y,\phi(X)])E^{(Y)}(dt') \}\\
& \hspace{2.95cm} = \frac{1}{\sqrt{2\pi}} \int_{[a,b]^2} \tilde{\psi}(t,t')
\mathcal{E}(dt\times dt')(-\im[Y,\phi(X)]), 
 \end{split}
\end{equation}
where the description of $\tilde{\psi}$ and $\mathcal{E}$ has been discussed in the proof of lemma~\ref{lmma2}.
On the other hand from lemma~\ref{lmma2} we say that
\begin{equation}\label{useq55}
 -\im[Y,\phi(X)] \in \mathcal{B}_{1\pm}(\mathcal{H})
 \quad \text{according as} \quad \phi\in C_{1\pm}^1([a,b])  \quad \text{or} \quad \mathcal{P}_{\pm}([a,b])\quad \text{respectively.} 
\end{equation}
Therefore by combining equations \eqref{useq54} and \eqref{useq55} we conclude that
\[
 -\im[\psi(Y),\phi(X)] \in \mathcal{B}_{1\pm}(\mathcal{H})
 \quad \text{according as} \quad \psi, \phi\in C_{1\pm}^1([a,b]) \quad \text{or} \quad \mathcal{P}_{\pm}([a,b])\quad \text{respectively.} 
\]
This completes the proof.
\end{proof}

As the title suggests, we shall next state Krein's theorem and
study its consequences on commutators like 
$[\psi(Y),\phi(X)]$ for $\psi, \phi$ $\in C_1^1([a,b])$ or $\mathcal{P}([a,b])$.
\begin{ppsn} \label{prop1}(Krein's Theorem)\cite{mgkrein53,krein, sinmoha,Voiculescu87}
Let $H$ and $H_0$ be two bounded self-adjoint operators in 
$\mathcal{H}$ such that $V$ $=H-H_0 \in \mathcal{B}_1(\mathcal{H})$.
Then there exists a unique $\xi_{H_0,H}(.)\in L^1(\mathbb{R})$
such that for $\phi \in C_1^1([a,b])$ or $\mathcal{P}([a,b])$, $\phi(H)-\phi(H_0)\in \mathcal{B}_1(\mathcal{H})$
and 
\[
 \Tr\{\phi(H)-\phi(H_0)\} = \int_a^b \phi'(\lambda)\xi_{H_0,H}(\lambda)d\lambda,
\]
where $\sigma(H) \cup \sigma(H_0)\subseteq [a,b]$. Furthermore 
\[
\int_a^b |\xi_{H_0,H}(\lambda)| d\lambda \leq \|V\|_1 ;
\int_a^b \xi_{H_0,H}(\lambda) d\lambda = \Tr V,
\]
and if 
$V\in \mathcal{B}_{1+}(\mathcal{H})$ or $\mathcal{B}_{1-}(\mathcal{H})$,
then $\xi_{H_0,H}(\lambda)$ is positive or negative respectively 
for almost all $\lambda \in [a,b].$
\end{ppsn}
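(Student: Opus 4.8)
\medskip
\noindent\textit{Outline of a proof.}
The plan is to run Krein's classical construction through the perturbation determinant~\cite{krein}. For $z\in\mathbb{C}\setminus\mathbb{R}$ put $R_0(z)=(H_0-z)^{-1}$, $R(z)=(H-z)^{-1}$ and
\[
 D(z)=\det\bigl(I+VR_0(z)\bigr),
\]
which is well defined since $VR_0(z)\in\mathcal{B}_1(\mathcal{H})$. First I would record the elementary facts: $D$ is analytic and non-vanishing on $\mathbb{C}\setminus\mathbb{R}$, $D(\bar z)=\overline{D(z)}$, $D(z)\to1$ and $\log D(z)=-\Tr(V)/z+O(|z|^{-2})$ as $|\textup{Im}\,z|\to\infty$; furthermore, differentiating the determinant and using the resolvent identity $R(z)-R_0(z)=-R(z)VR_0(z)$ together with cyclicity of the trace gives
\[
 \tfrac{d}{dz}\log D(z)=\Tr\bigl(R_0(z)-R(z)\bigr)=-\Tr\{\phi_z(H)-\phi_z(H_0)\},\qquad \phi_z(\lambda)=(\lambda-z)^{-1}.
\]
I would also use the identities $D_{H,H_0}(z)=D_{H_0,H}(z)^{-1}$ and $D_{H_0,H}(z)=D_{H_0,H_1}(z)\,D_{H_1,H}(z)$ for an intermediate bounded self-adjoint $H_1$ with trace-class differences.

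The essential input, which I would quote rather than reprove, is that the boundary functions $\tfrac1\pi\textup{Im}\,\log D(\cdot+i\varepsilon)$ stay bounded in $L^1(\mathbb{R})$ as $\varepsilon\downarrow0$, with $\tfrac1\pi\int_{\mathbb{R}}|\textup{Im}\,\log D(\lambda+i\varepsilon)|\,d\lambda\le\|V\|_1$, and converge in $L^1$ to a function $\xi=\xi_{H_0,H}$, yielding $\log D(z)=\int_{\mathbb{R}}\xi(\lambda)(\lambda-z)^{-1}\,d\lambda$; here $\xi$ is supported in $[a,b]$ since $D\equiv1$ off $\sigma(H)\cup\sigma(H_0)$. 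The quantitative core is the pair of estimates $e^{-\|V\|_1/\textup{Im}\,z}\le|D(z)|\le e^{\|V\|_1/\textup{Im}\,z}$ for $\textup{Im}\,z>0$ (from $|\det(I+K)|\le e^{\|K\|_1}$ applied to $VR_0(z)$ and to $-VR(z)$), supplemented, when $V\ge0$, by the Herglotz structure obtained on writing $V=B^{*}B$ with $B\in\mathcal{B}_2(\mathcal{H})$: then $D(z)=\det(I+M(z))$ with $M(z)=BR_0(z)B^{*}$ and $\tfrac1{2\im}(M(z)-M(z)^{*})=(\textup{Im}\,z)\,(BR_0(z))(BR_0(z))^{*}\ge0$ for $\textup{Im}\,z>0$, so $I+M(z)$ has numerical range in the closed upper half-plane, $\log\det(I+M(z))$ is a scalar Nevanlinna function with finite representing measure of total mass $\Tr V$ (read off from the asymptotics of $\log D$), whence $\xi\ge0$ a.e.\ and $\int\xi=\Tr V$. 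The case $V\le0$ follows from $\xi_{H_0,H}=-\xi_{H,H_0}$, and for general $V=V_{+}-V_{-}$ the multiplicativity of $D$ along $H_0\to H_0+V_{+}\to H$ gives $\int|\xi|\le\Tr V_{+}+\Tr V_{-}=\|V\|_1$ and $\int\xi=\Tr V_{+}-\Tr V_{-}=\Tr V$. Uniqueness of $\xi$ is immediate: two admissible densities have the same Cauchy transform off $\mathbb{R}$, hence agree a.e.

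It remains to propagate the trace formula from resolvents to the full test class. For $\phi_z(\lambda)=(\lambda-z)^{-1}$ the asserted identity is, by the derivative formula of the first step, just $\tfrac{d}{dz}\log D(z)=\int_{a}^{b}(\lambda-z)^{-2}\xi(\lambda)\,d\lambda$, i.e.\ differentiation of the Cauchy representation. Next, for $f$ holomorphic in a neighbourhood of $[a,b]$ — in particular for $f\in\mathcal{P}([a,b])$ and for $f(\lambda)=\Exp^{\im\alpha\lambda}$ — the Riesz functional calculus gives $f(H)-f(H_0)=-\tfrac1{2\pi\im}\oint_{\Gamma}f(z)\bigl(\phi_z(H)-\phi_z(H_0)\bigr)\,dz$ as a trace-norm convergent integral over a contour $\Gamma$ encircling $[a,b]$ (using $\|R(z)-R_0(z)\|_1\le\|V\|_1\,\|R(z)\|\,\|R_0(z)\|$); taking the trace, inserting the resolvent case, interchanging $\Tr$ with the contour integral, and applying Cauchy's formula for $f'$ yields $\Tr\{f(H)-f(H_0)\}=\int_{a}^{b}f'(\lambda)\xi(\lambda)\,d\lambda$, which in particular covers $\mathcal{P}([a,b])$. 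Finally, for $\phi\in C_1^1([a,b])$ I would write $\phi(\lambda)=\tfrac1{\sqrt{2\pi}}\int_{\mathbb{R}}\hat\phi(\alpha)\Exp^{\im\alpha\lambda}\,d\alpha$, use the Duhamel identity $\Exp^{\im\alpha H}-\Exp^{\im\alpha H_0}=\im\int_{0}^{\alpha}\Exp^{\im(\alpha-s)H}V\Exp^{\im s H_0}\,ds$ with $\|\Exp^{\im\alpha H}-\Exp^{\im\alpha H_0}\|_1\le|\alpha|\,\|V\|_1$ to see that $\phi(H)-\phi(H_0)\in\mathcal{B}_1(\mathcal{H})$ (because $\int_{\mathbb{R}}|\hat\phi(\alpha)|(1+|\alpha|)\,d\alpha<\infty$), and then integrate the exponential trace formula against $\hat\phi$, exchanging $\Tr$ with $\int d\alpha$ by absolute convergence of the $\mathcal{B}_1$-integral and the two integrals by Fubini (using $\xi\in L^1$ and the weighted bound on $\hat\phi$), obtaining $\Tr\{\phi(H)-\phi(H_0)\}=\int_{a}^{b}\phi'(\lambda)\xi(\lambda)\,d\lambda$.

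The main obstacle is exactly the step I propose to cite: proving that $\tfrac1\pi\textup{Im}\,\log D(\cdot+i0)$ really defines an $L^1$ function with the sharp bound $\|\xi\|_1\le\|V\|_1$ — equivalently, that the representing measure of the Nevanlinna-type function $\log D$ is absolutely continuous. Soft operator theory does not reach this; it needs the boundary-value analysis of perturbation determinants. Everything else — Duhamel expansions, cyclicity of the trace, contour integration, the Herglotz/sign argument, and the dominated-convergence and Fubini interchanges — is routine once $\xi$ is in hand.
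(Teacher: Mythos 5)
The paper does not prove this proposition at all: it is stated as Krein's classical theorem, with the citations \cite{mgkrein53,krein,sinmoha,Voiculescu87} standing in for the argument. Your outline is therefore not "the paper's proof," but it is the standard proof found in precisely those references: the perturbation determinant $D(z)=\det(I+VR_0(z))$, its logarithmic derivative $\Tr(R_0(z)-R(z))$, the representation $\xi=\tfrac1\pi\,\mathrm{Im}\,\log D(\cdot+\im 0)$, positivity of $\xi$ for $V\geq 0$ via $V=B^{*}B$ and $\det(I+AB)=\det(I+BA)$, the splitting $V=V_{+}-V_{-}$ together with multiplicativity of $D$ for the bounds $\int|\xi|\leq\|V\|_1$ and $\int\xi=\Tr V$, and the passage from resolvents to $\mathcal{P}([a,b])$ by Riesz contour integration and to $C_1^1([a,b])$ by the Duhamel--Fourier device (the same one the paper uses in Lemma \ref{lma1}). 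The sign conventions, asymptotics and the uniqueness argument via Cauchy transforms (or moments) are all correct. Two caveats. First, the analytic core --- that $\mathrm{Im}\,\log D(\cdot+\im\varepsilon)$ stays bounded in $L^1$ and converges, with $\|\xi\|_1\leq\|V\|_1$ --- is essentially the content of Krein's theorem itself, and you cite it rather than prove it; as a self-contained proof the proposal is therefore incomplete, though no more so than the paper, which quotes the whole statement. Second, a small inaccuracy: $D$ is \emph{not} identically $1$ off $\sigma(H)\cup\sigma(H_0)$. The correct reason $\xi$ is supported in $[a,b]$ is that $D$ is analytic, non-vanishing and satisfies $D(\bar z)=\overline{D(z)}$ with $D(\infty)=1$ on the (simply connected) complement of $\sigma(H)\cup\sigma(H_0)$ in the Riemann sphere, so the chosen branch of $\log D$ is real on $\mathbb{R}\setminus(\sigma(H)\cup\sigma(H_0))$ and its imaginary boundary value vanishes there.
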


\begin{thm}\label{thm3}
Assume $\textbf{(A)}$.
Let $\phi$ and $\psi$ be two
complex-valued functions
such that $\phi,$ $\psi \in C_1^1([a,b])$ or $\mathcal{P}([a,b])$.
Then $[\psi(Y),\phi(X)]$ is a trace class operator and 
there exist unique $L^1(\mathbb{R})$- functions $\xi(t;\psi)$
and $\eta(\phi;\lambda)$ such that 
\begin{equation}\label{krehelhow1}
 -\im \Tr\{[\psi(Y),\phi(X)]\} = \int_a^b \phi '(t) \xi(t;\psi) dt
 =\int_a^b \psi '(\lambda) \eta(\phi;\lambda) d\lambda.
\end{equation}
Furthermore, if $\phi,\psi\in C_{1+}^1([a,b])$ or $\mathcal{P}_+([a,b])$, 
then $\xi(t;\psi)$, $\eta(\phi;\lambda)\geq 0$
for almost all 
$t,\lambda \in [a,b],$
\[
\int_a^b |\xi(t;\psi)|dt \leq \left\|-\im [\psi(Y),X]\right\|_1~~,~~
\int_a^b \xi(t;\psi) dt = \Tr\left(\psi'(Y)D^2\right) \quad \text{and}
\]
\[
\int_a^b |\eta(\phi;\lambda)|d\lambda \leq \left\|-\im [Y,\phi(X)]\right\|_1~~,~~
\int_a^b \eta(\phi;\lambda)d\lambda =\Tr \left(\phi'(X)D^2\right).
\]
\end{thm}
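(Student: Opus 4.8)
The plan is to combine the structural identities of Lemmas~\ref{lma1}--\ref{lmma6} with Krein's Theorem (Proposition~\ref{prop1}) applied to a one-parameter family of self-adjoint perturbations. First I would fix $\psi$ and treat $\phi$ as the variable function. Lemma~\ref{lma1} already gives $[\psi(Y),\phi(X)]\in\mathcal{B}_1(\mathcal{H})$ when $\phi$ is a polynomial; for general $\phi\in C_1^1([a,b])$ one writes $\phi(X)=\frac{1}{\sqrt{2\pi}}\int_{\mathbb{R}}\hat{\phi}(\alpha)\Exp^{\im\alpha X}d\alpha$ and uses the same commutator expansion as in \eqref{useq80} (now with the bounded operator $\psi(Y)$ in place of $X$ and the trace-class operator $-\im[\psi(Y),X]$, which is trace class by Lemma~\ref{lma1}, playing the role of $D^2$) to get the trace-class membership together with the bound $\|[\psi(Y),\phi(X)]\|_1\le \|-\im[\psi(Y),X]\|_1\,\frac{1}{\sqrt{2\pi}}\int_{\mathbb{R}}|\hat{\phi}(\alpha)||\alpha|\,d\alpha$.

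Next I would produce the spectral shift function $\xi(t;\psi)$. The key observation is that by Lemma~\ref{lmma2}, $V_\psi:=-\im[\psi(Y),X]$ is a self-adjoint trace-class operator, and in fact $V_\psi\in\mathcal{B}_{1+}(\mathcal{H})$ when $\psi\in C_{1+}^1$ or $\mathcal{P}_+$. Consider the self-adjoint pair $H_0=X$ and $H=X+V_\psi$ (note $\sigma(H)\cup\sigma(H_0)$ lies in some compact interval, which we may still call $[a,b]$ after enlarging). Krein's theorem then furnishes a unique $\xi_{X,X+V_\psi}\in L^1(\mathbb{R})$, which I define to be $\xi(t;\psi)$, satisfying $\int_a^b|\xi(t;\psi)|\,dt\le\|V_\psi\|_1=\|-\im[\psi(Y),X]\|_1$, $\int_a^b\xi(t;\psi)\,dt=\Tr V_\psi=\Tr(\psi'(Y)D^2)$ by Lemma~\ref{lma1}, and positivity a.e.\ when $\psi\in C_{1+}^1$ or $\mathcal{P}_+$. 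The identity $\Tr\{\phi(H)-\phi(H_0)\}=\int_a^b\phi'(t)\xi(t;\psi)\,dt$ must be matched with $-\im\Tr\{[\psi(Y),\phi(X)]\}$; symmetrically, swapping the roles of $X,Y$ and of $\psi,\phi$ produces $\eta(\phi;\lambda):=\xi_{Y,Y-\im[Y,\phi(X)]}$ with the analogous properties.

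The main obstacle — and the heart of the argument — is establishing the trace identity
\[
-\im\Tr\{[\psi(Y),\phi(X)]\}=\Tr\{\phi(X+V_\psi)-\phi(X)\},
\]
equivalently $-\im\Tr\{[\psi(Y),\phi(X)]\}=\Tr\{\phi'(X)V_\psi\}+(\text{higher order})$, since Krein's formula on the right is genuinely nonlinear in the perturbation while the left side is linear in $\phi$. I would handle this by first proving it for $\phi$ a polynomial via the Helton--Howe-type telescoping in \eqref{useq95}: expand $[\psi(Y),X^j]=\sum_{k=0}^{j-1}X^{j-k-1}[\psi(Y),X]X^k = \im\sum_k X^{j-k-1}V_\psi X^k$, take traces using cyclicity to collapse the sum to $j\,\Tr\{X^{j-1}V_\psi\}=\Tr\{(X^j)'\,V_\psi\}$, and then note that the derivative of Krein's function $t\mapsto\Tr\{\phi(X+tV_\psi)\}$ at $t=0$ equals $\Tr\{\phi'(X)V_\psi\}$ — but crucially, because $V_\psi=-\im[\psi(Y),X]$ commutes with nothing in particular, one must verify that the \emph{full} Krein expression, not just its first-order term, equals $-\im\Tr\{[\psi(Y),\phi(X)]\}$. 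The resolution is that the linearity of $\phi\mapsto[\psi(Y),\phi(X)]$ forces $\xi(t;\psi)$ to be exactly the Krein shift for the \emph{perturbation problem reformulated with $\psi$ fixed}, and the cleanest route is: (i) verify \eqref{krehelhow1} for polynomials $\phi$ directly from Lemma~\ref{lma1} applied with $\phi$ in place of $\psi$ — indeed Lemma~\ref{lma1} already gives $-\im\Tr\{[\psi(Y),\phi(X)]\}$ as a trace of a product, and one rewrites $\Tr\{\phi'(X)(-\im[\psi(Y),X])\}$... wait, this is not quite Lemma~\ref{lma1}, so instead (ii) I invoke Krein's theorem for $(H_0,H)=(X, X+V_\psi)$ only to \emph{name} $\xi(t;\psi)$ and get its sign/integral properties, and separately prove \eqref{krehelhow1} by the density argument: it holds for polynomials by \eqref{useq95}-style computation showing both sides equal $\Tr\{\phi'(X)V_\psi\}$ in the polynomial case (here the higher-order Krein terms vanish because, for the specific structure $H-H_0=-\im[\psi(Y),X]$, one can check $\Tr\{[\psi(Y),\phi(X)]\}$ is linear while... ). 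Finally I extend from polynomials/to general $\phi\in C_1^1$ by the Fourier-transform approximation already used in Lemma~\ref{lma1}, using the $\mathcal{B}_1$-bound above to pass the $\Tr$ to the limit on the left and $\|\xi(\cdot;\psi)\|_{L^1}<\infty$ with $\|\phi_n'-\phi'\|_\infty\to0$ to pass to the limit on the right, and symmetrically for $\eta(\phi;\lambda)$; uniqueness of both functions is inherited from the uniqueness clause in Proposition~\ref{prop1} together with the fact that $\{\phi': \phi\in C_1^1([a,b])\}$ is dense in $C([a,b])$.
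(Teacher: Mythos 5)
Your proposal contains a genuine gap at exactly the point you flag as ``the main obstacle,'' and the gap is not repairable along the route you chose. You set $H_0=X$, $H=X+V_\psi$ with $V_\psi=-\im[\psi(Y),X]$, and then need
\[
-\im \Tr\{[\psi(Y),\phi(X)]\}=\Tr\{\phi(X+V_\psi)-\phi(X)\}.
\]
This identity is false in general. Already for $\phi(t)=t^2$ the left side equals $\Tr\{\phi'(X)V_\psi\}=2\Tr\{XV_\psi\}$ (by the telescoping/cyclicity computation, which is exact), whereas the right side is $2\Tr\{XV_\psi\}+\Tr\{V_\psi^2\}$; the quadratic and higher Krein terms do not vanish for the additive perturbation, and your parenthetical attempt to argue that they do trails off without an argument. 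Consequently the function $\xi_{X,\,X+V_\psi}$ you propose to \emph{name} $\xi(t;\psi)$ does not satisfy \eqref{krehelhow1}: Krein's formula for $(X,X+V_\psi)$ reproduces only the first-order term $\Tr\{\phi'(X)V_\psi\}$ of what you need, plus corrections you cannot discard. Your separate ``density argument'' plan does not rescue this, because the polynomial case already exhibits the discrepancy.

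The paper's resolution is to take $H$ to be the \emph{unitary conjugate} $H=\Exp^{\im\psi(Y)}X\Exp^{-\im\psi(Y)}$ rather than the additive perturbation. Then $\phi(H)=\Exp^{\im\psi(Y)}\phi(X)\Exp^{-\im\psi(Y)}$ by functional calculus, so the Duhamel formula gives exactly
\[
\Tr\{\phi(H)-\phi(H_0)\}=\im\,\Tr\Big\{\int_0^1 \Exp^{\im s\psi(Y)}[\psi(Y),\phi(X)]\Exp^{-\im s\psi(Y)}\,ds\Big\}=\im\,\Tr\{[\psi(Y),\phi(X)]\},
\]
with no higher-order corrections; Krein's theorem applied to this pair then yields \eqref{krehelhow1} directly. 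The sign, the bound $\int_a^b|\xi(t;\psi)|\,dt\le\|H_0-H\|_1\le\|-\im[\psi(Y),X]\|_1$, and the normalization $\int_a^b\xi(t;\psi)\,dt=\Tr(H_0-H)=\Tr(\psi'(Y)D^2)$ all survive because $H_0-H$ is an average of unitary conjugates of $-\im[\psi(Y),X]$, which preserves positivity, trace and the trace norm bound. Your first paragraph (trace-class membership of $[\psi(Y),\phi(X)]$ via the Fourier/polynomial expansions) is correct and matches the paper, but the core of the theorem requires replacing your additive perturbation by the unitary conjugation.
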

\begin{proof}
 At first we assume $\phi,\psi$ are real valued. 
 Now let us consider the self-adjoint operators $H_0=X$ and $H=\Exp^{\im\psi(Y)}X\Exp^{-\im\psi(Y)}$.
 Then
\begin{equation}\label{useq1}
\begin{split}
 H-H_0
 = \int_0^1\frac{d}{ds}\left(\Exp^{\im s\psi(Y)}X\Exp^{-\im s\psi(Y)} \right)ds
 = \im \int_0^1 \Exp^{\im s\psi(Y)}[\psi(Y),X]\Exp^{-\im s\psi(Y)}ds \in \mathcal{B}_1(\mathcal{H}),
\end{split}
\end{equation}
by lemma~\ref{lma1}. On the other hand for $\psi, \phi\in C_1^1([a,b])$, a  computation similar to that in 
\eqref{useq80} yields that
\begin{equation}\label{useq13}
\begin{split}
  [\psi(Y),\phi(X)] = \int_{\mathbb{R}} \im \hat{\psi}(\alpha) d\alpha \int_0^{\alpha}
  \Exp^{\im(\alpha-\beta) Y}[Y,\phi(X)]\Exp^{\im\beta Y} d\beta \in \mathcal{B}_1(\mathcal{H}),
\end{split}
\end{equation}
since $\int_{\mathbb{R}} |\hat{\psi}(\alpha)||\alpha|d\alpha <\infty$ and since 
$[Y,\phi(X)]\in \mathcal{B}_1(\mathcal{H})$, by lemma~\ref{lma1}. Similarly for $\phi$, $\psi(t)= \sum\limits_{j=0}^n c_jt^j\in \mathcal{P}([a,b]),$
by repeating the same calculations as in \eqref{useq95} we conclude that
\begin{equation}\label{useq70}
 \begin{split}
    [\psi(Y),\phi(X)] = \sum_{j=0}^nc_j[Y^j,\phi(X)]
  =  \sum_{j=0}^nc_j \sum_{k=0}^{j-1}Y^{j-k-1}[Y,\phi(X)] Y^k  \in \mathcal{B}_1(\mathcal{H}),.
 \end{split}
\end{equation}
since 
$[Y,\phi(X)]\in \mathcal{B}_1(\mathcal{H})$, by lemma~\ref{lma1}.
Thus by applying proposition~\ref{prop1} for the above operators $H,H_0$ with the function
$\phi$, we conclude that there exists a unique function $\tilde{\xi}(t;\psi)$ $\in L^1(\mathbb{R})$ such that
$\phi(H)-\phi(H_0)$ is trace class and 
\begin{equation}\label{useq3}
 \Tr \{\phi(H)-\phi(H_0)\} = \int_a^b \phi '(t)\tilde{\xi}(t;\psi)dt.
\end{equation}
Furthermore from equation \eqref{useq1} we conclude  that $H-H_0 \leq 0$, since 
$\im [\psi(Y),X]\leq 0$ by lemma~\ref{lmma2} for $\psi\in C_{1+}^1([a,b])$ or $\mathcal{P}_+([a,b])$. Therefore
from Proposition~\ref{prop1} we also note that $\tilde{\xi}(t;\psi)\leq 0$ for almost all
$t\in [a,b]$.
Now if we compute the left hand side of \eqref{useq3}, we get
\begin{equation}\label{useq4}
\begin{split}
 &\Tr \{\phi(H)-\phi(H_0)\} = \Tr\{\phi (\Exp^{\im\psi(Y)}X\Exp^{-\im\psi(Y)}) - \phi(X)\}
 =  \Tr\{\Exp^{\im\psi(Y)}\phi(X)\Exp^{-\im\psi(Y)})-\phi(X)\}\\
 & = \im  \Tr \{ \int_0^1 \Exp^{\im s\psi(Y)}[\psi(Y),\phi(X)]\Exp^{-\im s\psi(Y)}ds \}
 = \im  \Tr \{[\psi(Y),\phi(X)]\},
\end{split}
\end{equation}
where for the second equality we have used functional calculus, for the third equality we have 
used equation \eqref{useq1} and for the last equality we have used the cyclicity of trace. Thus by
combining \eqref{useq3} and \eqref{useq4} we have
\begin{equation}\label{useq5}
 -\im  \Tr \{[\psi(Y),\phi(X)]\} = -\int_a^b \phi'(t)\tilde{\xi}(t;\psi)dt
 = \int_a^b \phi '(t)\xi(t;\psi)dt,
\end{equation}
where $\xi(t;\psi)\equiv-\tilde{\xi}(t;\psi)\geq 0$. 
Next if we consider the operators $H_0=Y$ 
and $H=\Exp^{\im\phi(X)}Y\Exp^{-\im\phi(X)}$.
The by a similar calculation we conclude that
\begin{equation}\label{useq16}
 H-H_0
 =-\im \int_0^1 \Exp^{\im s\phi(X)} [Y,\phi(X)] \Exp^{-\im s \phi(X)} ds \in \mathcal{B}_1(\mathcal{H}),
\end{equation}
since $[Y,\phi(X)]\in \mathcal{B}_1(\mathcal{H})$, by lemma~\ref{lma1}.
Therefore by applying proposition~\ref{prop1} for  the above operators $H,H_0$ with the function
$\psi$, we conclude that there exists a unique function $\eta(\phi;\lambda)$ $\in L^1(\mathbb{R})$ such that
$\psi(H)-\psi(H_0)$ is trace class and
\begin{equation}\label{useq6}
 \Tr \{\psi(H)-\psi(H_0)\} = \int_a^b \psi '(\lambda)\eta(\phi;\lambda)d\lambda.
\end{equation}
Moreover, form equation \eqref{useq16} we note that $H-H_0\geq 0$, since $-\im [Y,\phi(X)]\geq 0$
by lemma~\ref{lmma2} for $\phi\in C_{1+}^1([a,b])$ or $\mathcal{P}_+([a,b])$.  Therefore
from proposition~\ref{prop1} we also note that $\eta(\phi;\lambda)\geq 0$ for almost all
$\lambda \in [a,b]$.
As before if we compute the left hand side of \eqref{useq6} we get
\begin{equation}\label{useq7}
\begin{split}
 &\Tr \{\psi(H)-\psi(H_0)\} = \Tr\{\psi (\Exp^{\im\phi(X)}Y\Exp^{-\im\phi(X)}) - \psi(Y)\}
 =  \Tr\{\Exp^{\im\phi(X)}\psi(Y)\Exp^{-\im\phi(X)})-\psi(Y)\}\\
 & = \im  \Tr \{ \int_0^1 \Exp^{\im s\phi(X)}[\phi(X),\psi(Y)]\Exp^{-\im s\phi(X)}ds \}
 = \im  \Tr \{[\phi(X),\psi(Y)]\} = -\im  \Tr \{[\psi(Y),\phi(X)]\}.
\end{split}
\end{equation}
Thus by combining \eqref{useq6} and \eqref{useq7} we have
\begin{equation}\label{useq8}
 -\im  \Tr \{[\psi(Y),\phi(X)]\} = \int_a^b \psi '(\lambda)\eta(\phi,\lambda)d\lambda.
\end{equation}
Therefore the conclusion of the theorem follows from \eqref{useq5} and \eqref{useq8}
for real valued $\phi, \psi$. The same above conclusions can be achieved
for complex valued functions $\phi,\psi\in C_1^1([a,b])$ or $\mathcal{P}([a,b])$ by decomposing
\[
 \phi =\phi_1 + \im \phi_2 \quad \text{and} \quad \psi =\psi_1 +\im \psi_2,
\]
and by applying the conclusion of the
theorem for real valued functions $\phi_1,\phi_2,\psi_1,\psi_2$.
By equation \eqref{useq1} of proposition \ref{prop1} and lemma \ref{lma1}, it follows that
\[
\int_a^b |\xi(t;\psi)|dt \leq \left\|H_0-H\right\|_1\leq \left\|-\im [\psi(Y),X]\right\|_1 
\]
and
\[
\int_a^b \xi(t;\psi)dt = \Tr (H_0-H)=\Tr \left(-\im [\psi(Y),X]\right) = \Tr \left(\psi'(Y)D^2\right).
\]
The other results for $\eta$ follows similarly. This completes the proof.
\end{proof}
\begin{rmrk}
 It is clear from equation \eqref{krehelhow1} that both $\xi(t; \cdot )$ and $\eta(\cdot ;\lambda)$ depend
 linearly on $\psi'$ and $\phi'$ respectively and not on $\psi$ and $\phi$ themselves as the left-handside 
 in \eqref{krehelhow1} appears to. Therefore, to avoid confusion it is preferable to replace $\psi'$, $\phi'$
 by $\psi$ and $\phi$ respectively, demand that $\psi$, $\phi$ $\in \mathcal{P}([a,b])$, and
 consequently replace $\psi$, $\phi$ by their indefinite integrals $\mathcal{J}(\psi)$ and $\mathcal{J}(\phi)$ respectively.
 Thus the equation \eqref{krehelhow1} now reads: For $\psi, \phi $ $\in \mathcal{P}([a,b])$
 \begin{equation}\label{rmkeq}
  \Tr \{-\im \left[\mathcal{J}(\psi)(Y), \mathcal{J}(\phi)(X)\right]\} =
  \int_a^b \phi(t) \xi (t;\psi) dt = \int_a^b \psi(\lambda) \eta (\phi;\lambda)d\lambda,
 \end{equation}
 where we have retained the earlier notation $\xi(t;\psi)$ and $\eta(\phi;\lambda)$. Furthermore, for almost all $t,\lambda \in [a,b]$, the maps 
 \[
  \mathcal{P}([a,b])\ni \psi \longmapsto \xi(t;\psi)\in L^1(\mathbb{R}) \quad \text{and} \quad
  \mathcal{P}([a,b])\ni \phi \longmapsto \eta(\phi;\lambda)\in L^1(\mathbb{R})
 \]
 are positive linear maps. The next theorem gives $L^1$-estimates for $\xi(\cdot; \psi)$ and $\eta(\phi; \cdot)$ which
 allows one to extend these maps for all $\psi, \phi$ $\in C([a,b])$.
\end{rmrk}
\begin{thm}\label{thm1}
Assume $\textbf{(A)}$.

\noindent \textup{(i)} Then 
 \[
  \mathcal{P}([a,b])\times \mathcal{P}([a,b])\ni (\psi,\phi) \longmapsto \Tr\{-\im \left[\mathcal{J}(\psi)(Y),
  \mathcal{J}(\phi)(X)\right]\}
 \]
can be extended as a positive linear map on $C([a,b])\times C([a,b])$. Furthermore if $\Delta,$ $ \Omega$ $\in Borel ([a,b]),$
then 
\begin{equation}\label{useq73}
  \Tr\{-\im \left[\mathcal{J}(\chi_{_{\Delta}})(Y),
  \mathcal{J}(\chi_{_{\Omega}})(X)\right]\} = \int_{\Omega} \xi(t; \Delta)dt = \int_{\Delta} \eta (\Omega;\lambda)d\lambda,
\end{equation}
where we have written $\xi(t;\Delta)$ for $\xi(t; \chi_{_{\Delta}})$ and $\eta(\Omega;\lambda)$ for 
$\eta(\chi_{_{\Omega}}; \lambda)$. For almost all fixed $t,\lambda \in [a,b]$, $\xi(t; \cdot)$ and $\eta(\cdot; \lambda)$
are countably additive positive measures such that 
\[
 \int_a^b \xi(t;\Delta) dt = \Tr \left(\chi_{_{\Delta}}(Y)D^2\right), \int_a^b \eta(\Omega; \lambda) d\lambda = 
 \Tr \left(\chi_{_{\Omega}}(X)D^2\right),
\]
and
\[
 \int_a^b \xi(t; [a,b]) dt = \int_a^b \eta([a,b];\lambda)d\lambda = \Tr (D^2).
\]
\noindent \textup{(ii)} The set functions 
\[
 Borel([a,b])\ni \Delta \longmapsto \xi (t: \Delta) \quad \text{and} \quad Borel([a,b])\ni \Omega \longmapsto \eta(\Omega; \lambda)
\]
are absolutely continuous with respect to the Lebesgue measures and the Radon-Nikodym derivatives satisfy:
\[
 \frac{\xi(t; d\lambda)}{d\lambda} = \frac{\eta(dt;\lambda)}{dt}\equiv r(t,\lambda) \geq 0
\]
for almost all $t,\lambda,$ with $\|r\|_{L^1([a,b]^2)} = \Tr (D^2)$.
\vspace{0.1in}

\noindent \textup{(iii)} The statement of the theorem \ref{thm3} now takes the form: For $\psi, \phi \in C^1([a,b])$ 
\begin{equation}\label{useq53}
 \Tr\{-\im [\psi(Y),\phi(X)]\} = \int_{[a,b]^2} \phi'(t) \psi'(\lambda) r(t,\lambda) dt d\lambda,
\end{equation}
with the unique non-negative $L^1([a,b]^2)$ function $r$, which is sometimes called Carey-Pincus Principal function.
\end{thm}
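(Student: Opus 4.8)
The plan is to prove Theorem~\ref{thm1} in its three stages, taking Theorem~\ref{thm3} and the reformulation~\eqref{rmkeq} as the starting point. Throughout, ``$t$'' is an $X$-spectral variable and ``$\lambda$'' a $Y$-spectral variable, matching the notation of~\eqref{useq53}.

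\emph{Part (i).} The first task is the two $L^1$-estimates that make the extension possible. In the parametrization of~\eqref{rmkeq}, the positivity clause of Theorem~\ref{thm3} reads: $\psi\ge 0$ on $[a,b]$ implies $\xi(\cdot;\psi)\ge 0$ a.e., and likewise for $\eta$; together with $\int_a^b\xi(t;\psi)\,dt=\Tr(\psi(Y)D^2)$ this gives, for every real $\psi\in\mathcal{P}([a,b])$, the bound $|\xi(\cdot;\psi)|\le\|\psi\|_{\infty}\,\xi(\cdot;1)$ a.e. (apply positivity to $\|\psi\|_{\infty}\pm\psi$), hence $\|\xi(\cdot;\psi)\|_{L^1}\le\|\psi\|_{\infty}\Tr(D^2)$, and similarly for $\eta$. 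By Weierstrass approximation the maps $\psi\mapsto\xi(\cdot;\psi)$ and $\phi\mapsto\eta(\phi;\cdot)$ extend to bounded positive linear maps $C([a,b])\to L^1([a,b])$ (positivity survives because the positive cone of $L^1$ is closed and a nonnegative $f\in C([a,b])$ is a uniform limit of nonnegative polynomials, e.g.\ its Bernstein polynomials); consequently $(\psi,\phi)\mapsto\int_a^b\phi(t)\xi(t;\psi)\,dt=\int_a^b\psi(\lambda)\eta(\phi;\lambda)\,d\lambda$ extends to a bounded positive bilinear form on $C([a,b])^2$. To reach $\chi_\Delta$, take continuous $\psi_n\downarrow\chi_\Delta$; monotonicity forces $\xi(t;\psi_n)\downarrow$ a limit a.e., declared to be $\xi(t;\Delta)$, and $\int\xi(t;\psi_n)\,dt=\Tr(\psi_n(Y)D^2)\to\Tr(\chi_\Delta(Y)D^2)$ by dominated convergence on the operator side. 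Carrying this out over a fixed countable algebra generating $Borel([a,b])$ (finite unions of rational intervals), additivity and continuity from above hold off one Lebesgue-null set of $t$, so for a.e.\ $t$ the set function $\Delta\mapsto\xi(t;\Delta)$ is a genuine finite positive Borel measure; the identities of part~(i), including~\eqref{useq73} and the normalizations, follow from the same limiting procedure and the Theorem~\ref{thm3} data, with the trace-class property of $[\mathcal{J}(\chi_\Delta)(Y),\mathcal{J}(\chi_\Omega)(X)]$ coming out of this construction.

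\emph{Part (ii).} Define the joint set function $\rho$ on $[a,b]^2$ by $\rho(\Omega\times\Delta)=\int_\Omega\xi(t;\Delta)\,dt=\int_\Delta\eta(\Omega;\lambda)\,d\lambda$. The rectangles form a generating $\pi$-system, and the monotone-convergence arguments of part~(i) give finite and countable additivity, so $\rho$ extends to a finite positive Borel measure. Its marginals are $\rho([a,b]\times\Delta)=\Tr(\chi_\Delta(Y)D^2)$ and $\rho(\Omega\times[a,b])=\Tr(\chi_\Omega(X)D^2)$; since $X$ and $Y$ are spectrally absolutely continuous (assumption~\textbf{(A)}), these are absolutely continuous in $\Delta$, resp.\ $\Omega$, with densities $p_Y,p_X\in L^1([a,b])$, giving $\eta([a,b];\lambda)=p_Y(\lambda)$ and $\int\xi(t;[a,b])\,dt=\Tr(D^2)$, etc. The real content, and the step I expect to be the main obstacle, is to upgrade this to $\rho\ll\mathrm{Leb}_2$ — absolute continuity of both marginals is not enough in general. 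Here I would integrate by parts in the spectral variables: from $\mathcal{J}(\chi_\Delta)(Y)=\int_\Delta\big(I-E^{(Y)}([a,\lambda])\big)\,d\lambda$ (Fubini for the spectral measure of $Y$) and the identity $[I-A,I-B]=[A,B]$ one gets $[\mathcal{J}(\chi_\Delta)(Y),\mathcal{J}(\chi_\Omega)(X)]=\int_\Delta\!\int_\Omega[E^{(Y)}([a,\lambda]),E^{(X)}([a,t])]\,dt\,d\lambda$, hence, taking $-\im\Tr$, $\rho(\Omega\times\Delta)=\int_\Omega\!\int_\Delta r(t,\lambda)\,dt\,d\lambda$ with $r(t,\lambda):=-\im\,\Tr[E^{(Y)}([a,\lambda]),E^{(X)}([a,t])]$. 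Since rectangles generate, this forces $\rho=r\,dt\,d\lambda$ on all of $Borel([a,b]^2)$, so $\rho\ll\mathrm{Leb}_2$; and $r\ge 0$ a.e.\ because $\rho(\Omega\times\Delta)\ge 0$ for all rectangles (Lemma~\ref{lmma2}), while $\|r\|_{L^1([a,b]^2)}=\rho([a,b]^2)=\Tr(-\im[Y,X])=\Tr(D^2)$. The delicate point is justifying the interchange of the trace with the double spectral integral — equivalently, that $[E^{(Y)}([a,\lambda]),E^{(X)}([a,t])]$ is trace class for a.e.\ $(t,\lambda)$ with locally integrable trace norm; this is exactly where the trace-class commutator hypothesis together with the absolute continuity of $X$ and $Y$ must be used in an essential way. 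An alternative, if that route is too rigid, is to obtain $r$ as the mixed Radon--Nikodym derivative of $(t,\lambda)\mapsto\rho([a,t]\times[a,\lambda])$ and to prove absolute continuity slicewise. Once $\rho=r\,dt\,d\lambda$, disintegrating against each marginal yields $\xi(t;d\lambda)=r(t,\lambda)\,d\lambda$ for a.e.\ $t$ and $\eta(dt;\lambda)=r(t,\lambda)\,dt$ for a.e.\ $\lambda$, so the two Radon--Nikodym derivatives coincide with $r$, as claimed; the normalization reuses $[\mathcal{J}(1)(Y),\mathcal{J}(1)(X)]=[Y,X]=\im D^2$.

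\emph{Part (iii).} For $\psi,\phi\in C^1([a,b])$, write $\psi=\psi(a)+\mathcal{J}(\psi')$ and $\phi=\phi(a)+\mathcal{J}(\phi')$ with $\psi',\phi'\in C([a,b])$; the additive constants drop out of the commutator, so $-\im[\psi(Y),\phi(X)]=-\im[\mathcal{J}(\psi')(Y),\mathcal{J}(\phi')(X)]$. Applying the $C([a,b])^2$-extension of~\eqref{rmkeq} from part~(i) and then inserting $\xi(t;\psi')=\int_a^b\psi'(\lambda)\,r(t,\lambda)\,d\lambda$ from part~(ii) gives $\Tr\{-\im[\psi(Y),\phi(X)]\}=\int_a^b\phi'(t)\xi(t;\psi')\,dt=\int_{[a,b]^2}\phi'(t)\psi'(\lambda)\,r(t,\lambda)\,dt\,d\lambda$, which is~\eqref{useq53}; uniqueness and nonnegativity of $r$ are already established. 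The one loose end is trace-class membership of $[\psi(Y),\phi(X)]$ for $\psi,\phi$ merely in $C^1$ (rather than $C_1^1$); I would settle it by approximating $\psi',\phi'$ uniformly by polynomials, using the estimates of Lemma~\ref{lma1} and~\eqref{useq13} together with the weak-$*$ compactness of balls in $\mathcal{B}_1(\mathcal{H})$ (the trace pairs continuously with the identity operator), or simply by quoting the known Lipschitz functional-calculus commutator bounds.
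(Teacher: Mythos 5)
Your part (i) is essentially sound and in one respect cleaner than the paper's: bounding $\|\xi(\cdot;\psi)\|_{L^1}\le\|\psi\|_{\infty}\Tr(D^2)$ by applying positivity to $\|\psi\|_{\infty}\pm\psi$ is a legitimate substitute for the paper's decomposition into $\psi_{\pm},\phi_{\pm}$. But it only controls the scalar functional, not the operator: equation \eqref{useq73} presupposes that $[\mathcal{J}(\chi_{_{\Delta}})(Y),\mathcal{J}(\chi_{_{\Omega}})(X)]$ is trace class, and ``coming out of this construction'' is not an argument. The paper obtains this from the trace-norm estimate $\left\|-\im\left[\mathcal{J}(\psi)(Y),\mathcal{J}(\phi)(X)\right]\right\|_1\le\Tr\{|\phi|(X)(-\im[\mathcal{J}(|\psi|)(Y),X])\}$, deduced from Lemma \ref{lmma6} applied to the four pieces coming from $\psi_{\pm},\phi_{\pm}$; this is what makes the commutators converge in $\mathcal{B}_1(\mathcal{H})$ under uniform (resp.\ bounded pointwise) approximation. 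You need some version of this bound, and neither of your suggested fixes in part (iii) supplies it: weak-$*$ compactness requires a uniform $\|\cdot\|_1$ bound you have not derived, and ``Lipschitz functional-calculus commutator bounds'' are false in general (Lipschitz functions need not be operator Lipschitz).

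The serious gap is in part (ii). Your main route needs $[E^{(Y)}([a,\lambda]),E^{(X)}([a,t])]$ to be trace class for a.e.\ $(t,\lambda)$ with integrable trace norm; nothing in assumption \textbf{(A)} gives this (the map $f\mapsto[f(Y),X]$ is not bounded from $L^{\infty}$ into $\mathcal{B}_1(\mathcal{H})$ even when $[Y,X]\in\mathcal{B}_1(\mathcal{H})$), and you yourself flag this as an unresolved obstacle. The point you are missing is that no such machinery is needed: you already hold the identity $\int_{\Omega}\xi(t;\Delta)\,dt=\int_{\Delta}\eta(\Omega;\lambda)\,d\lambda$ with $\xi,\eta\ge 0$. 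If $\Delta$ is Lebesgue-null the right-hand side vanishes for every $\Omega$, hence $\xi(t;\Delta)=0$ for a.e.\ $t$; running over a countable generating family (or disintegrating your measure $\rho$) shows the slice measures $\xi(t;\cdot)$ are absolutely continuous for a.e.\ $t$, and symmetrically for $\eta(\cdot;\lambda)$. Writing $\xi(t;d\lambda)=g(t,\lambda)\,d\lambda$ and $\eta(dt;\lambda)=h(t,\lambda)\,dt$, the rectangle identity forces $g=h=:r$ a.e.\ and $\rho=r\,dt\,d\lambda$, which is exactly the absolute continuity of $\rho$ you were after; this is the paper's (much shorter) argument. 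With that repaired, your part (iii) goes through modulo the trace-class point raised above.
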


\begin{proof}
 Let $\psi, \phi \in \mathcal{P}([a,b])$, then $\mathcal{J}(\psi)$ and $\mathcal{J}(\phi)$ are also polynomials.
As in \eqref{useq70}, a similar computation with $\psi, \phi \in \mathcal{P}([a,b])$ and if 
$\mathcal{J}(\phi)(t) = \sum\limits_{j=0}^n c_jt^j$ leads to 
\[
 -\im \left[\mathcal{J}(\psi)(Y),\mathcal{J}(\phi)(X)\right]  =
 -\im  \sum_{j=0}^n c_j \left(\sum_{k=0}^{j-1}X^k \left[\mathcal{J}(\psi)(Y),X\right]X^{j-k-1}\right)
\]
and taking trace 
\begin{equation}\label{useq71}
 \begin{split}
 & \Tr\{-\im \left[\mathcal{J}(\psi)(Y),\mathcal{J}(\phi)(X)\right]\}
  = -\im \Tr\{\sum_{j=1}^n j c_j X^{j-1}\left(\left[\mathcal{J}(\psi)(Y),X\right]\right)\}\\
 &\hspace{5.05cm} = \Tr\{\phi(X) \left(-\im \left[\mathcal{J}(\psi)(Y),X\right]\right)\}
 \end{split}
\end{equation}
and interchanging the role of $X$ and $Y$ (along with an associated negative sign) the above is equal to
\begin{equation}\label{useq50}
 \Tr\{-\im \left[\mathcal{J}(\psi)(Y),\mathcal{J}(\phi)(X)\right]\}
 = \Tr\{\psi(Y)\left(-\im\left[Y,\mathcal{J}(\phi)(X)\right]\right)\},
\end{equation}
and all these expressions are also equal to (by theorem \ref{thm3}) 
\[
 \int_a^b \phi(t) \xi(t; \psi) dt = \int_a^b \psi(\lambda) \eta(\phi; \lambda) d\lambda
\]
for respective $\phi$ and $\psi$. Now let $\phi =\phi_+ - \phi_-$ and $\psi = \psi_+-\psi_-$, then 
$\phi_{\pm}$, $\psi_{\pm}$ are all non-negative. The domains of definitions of $\phi_{\pm}$ are open sets
which are each a disjoint union of a countable collection of open intervals and furthermore, clearly 
\emph{Supp} $\phi_{+}$ $\cap$ \emph{Supp} $\phi_-$ $=$ $ \{t \in [a,b]| \phi(t) =0\}$, which is a finite discrete set.
Therefore $\phi_+$ and $\phi_-$ and hence $|\phi|=\phi_+ + \phi_-$ are polynomials if $\phi \in \mathcal{P}([a,b])$.
By lemma~\ref{lmma6}, $-\im [\psi(Y),\phi(X)]\in \mathcal{B}_{1\pm}(\mathcal{H})$
according as $\psi, \phi \in \mathcal{P}_{\pm}([a,b])$ respectively.
Therefore by linearity, 
\begin{equation*}
 \begin{split}
 & -\im \left[\mathcal{J}(\psi)(Y),\mathcal{J}(\phi)(X)\right]
  = -\im \left[\mathcal{J}(\psi_+)(Y),\mathcal{J}(\phi_+)(X)\right]
  + \im \left[\mathcal{J}(\psi_-)(Y),\mathcal{J}(\phi_+)(X)\right]\\
  & \hspace{4.8cm} +  \im \left[\mathcal{J}(\psi_+)(Y),\mathcal{J}(\phi_-)(X)\right]
  -  \im \left[\mathcal{J}(\psi_-)(Y),\mathcal{J}(\phi_-)(X)\right],
 \end{split}
\end{equation*}
which by using lemma~\ref{lmma6} and equations \eqref{useq71} and \eqref{useq50} we conclude that
\begin{equation}
 \begin{split}
 & \left\|-\im \left[\mathcal{J}(\psi)(Y),\mathcal{J}(\phi)(X)\right]\right\|_1\\
  & \leq \Tr\{-\im \left[\mathcal{J}(\psi_+)(Y),\mathcal{J}(\phi_+)(X)\right]\}
  + \Tr\{-\im \left[\mathcal{J}(\psi_-)(Y),\mathcal{J}(\phi_+)(X)\right]\} \\
  &  +  \Tr\{-\im \left[\mathcal{J}(\psi_+)(Y),\mathcal{J}(\phi_-)(X)\right]\}
  + \Tr\{-  \im \left[\mathcal{J}(\psi_-)(Y),\mathcal{J}(\phi_-)(X)\right]\}\\
 & = \Tr \{|\phi|(X) \left(-\im \left[\mathcal{J}(|\psi|)(Y),X\right]\right)\}
 \end{split}
\end{equation}
and similarly 
\begin{equation}
  \left\|-\im \left[\mathcal{J}(\psi)(Y),\mathcal{J}(\phi)(X)\right]\right\|_1
  \leq \Tr\{|\psi|(Y)\left(-\im \left[Y,\mathcal{J}(|\phi|)(X)\right]\right)\}.
\end{equation}
These estimates allow us to extend the formulae to $\psi$, $\phi$ $\in C([a,b])$. Indeed, 
consider a sequence $\{\phi_m\}$ of polynomials converging to $\phi \in C([a,b])$ uniformly
in $[a,b]$. Then for a fixed $\psi \in \mathcal{P}([a,b])$ 
\[
 \left\|-\im \left[\mathcal{J}(\psi)(Y),\mathcal{J}(\phi_n-\phi_m)(X)\right]\right\|_1
 \leq \Tr\{|\phi_n-\phi_m|(X) \left(-\im \left[\mathcal{J}(|\psi|)(Y),X\right]\right)\}
\]
which converges to zero as $n,m\longrightarrow \infty$, since 
$\||\phi_n-\phi_m|(X)\| = \|\phi_n-\phi_m\|_{\infty}$ and since $-\im \left[\mathcal{J}(|\psi|)(Y),X\right]
\in \mathcal{B}_1(\mathcal{H})$. Proceeding similarly for $\psi$ with $\phi$ fixed, one concludes that for $\psi$, $\phi$ $\in C([a,b])$
\[
-\im \left[\mathcal{J}(\psi)(Y),\mathcal{J}(\phi)(X)\right] \in \mathcal{B}_1(\mathcal{H}) \quad \text{and}
\]
\begin{equation}
 \begin{split}
& \Tr\{-\im \left[\mathcal{J}(\psi)(Y),\mathcal{J}(\phi)(X)\right]\} =
\Tr\{\phi(X) \left(-\im \left[\mathcal{J}(\psi)(Y),X\right]\right)\}\\
&\hspace{5.05cm}  = \Tr\{\psi(Y)\left(-\im \left[Y,\mathcal{J}(\phi)(X)\right]\right)\}.
 \end{split}
\end{equation}
On the other hand 
\begin{equation}
 \begin{split}
&\left|\int_a^b [\phi_n(t)-\phi(t)] \xi(t;\psi) dt)\right|
\leq \|\phi_n-\phi\|_{\infty} \int_a^b |\xi(t;\psi)|dt \\
& \leq \|\phi_n-\phi\|_{\infty} \left\|-\im \left[\mathcal{J}(\psi)(Y),X\right]\right\|_1,
 \end{split}
\end{equation}
and similarly 
\begin{equation*}
 \begin{split}
&\left|\int_a^b \psi(\lambda) \eta(\phi_n-\phi; \lambda) d\lambda\right| 
\leq \|\psi\|_{\infty} \int_a^b |\eta(\phi_n-\phi; \lambda)| d\lambda\\
& \leq \|\psi\|_{\infty} \left\|-\im \left[Y, \mathcal{J}(\phi_n-\phi)(X)\right]\right\|_1
\leq \|\psi\|_{\infty} \Tr\{-\im \left[Y, \mathcal{J}(|\phi_n-\phi|)(X)\right]\},
 \end{split}
\end{equation*}
which by lemma \ref{lma1} is equal to
\[
 \|\psi\|_{\infty} \Tr\{|\phi_n-\phi|(X) D^2\} \longrightarrow 0 \quad \text{as} \quad n\longrightarrow \infty.
\]
Therefore, one has that all the equal expressions in \eqref{useq71} and \eqref{useq50} are also equal to
\begin{equation}\label{useq72}
 \int_a^b \phi(t) \xi(t;\psi) dt = \int_a^b \psi(\lambda) \eta(\phi;\lambda) d\lambda
\end{equation}
for all $\phi\in C([a,b])$ and $\psi \in \mathcal{P}([a,b])$. Now holding $\phi$ fixed in $C([a,b])$ and
approximating $\psi\in C([a,b])$ by a sequence $\{\psi_m\}$ of polynomials, exactly as above, one establishes the equality 
\eqref{rmkeq} for all $\phi, \psi \in C([a,b])$.
\vspace{0.1in}

\noindent Next we want to approximate characteristic functions by continuous functions in the same formula. Let
$\{\phi_m\}$ be a sequence of uniformly bounded continuous 
functions in $[a,b]$ converging point-wise almost everywhere to $\chi_{_{\Omega}}$ with 
$\Omega \in Borel (\sigma(X))$ and let $\{\psi_m\}$ be a sequence similarly approximating $\chi_{_{\Delta}}$
for $\Delta \in Borel(\sigma(Y))$. Then by \eqref{useq72}, we have that for $\psi \in C([a,b])$
\begin{equation*}
 \begin{split}
& \int_a^b \left[\phi_n(t) - \chi_{_{\Omega}} \right]  \xi(t;\psi) dt 
= \Tr\{-\im \left[\mathcal{J}(\psi)(Y),\mathcal{J}(\phi_n-\chi_{_{\Omega}})(X)\right]\}\\
& \hspace{4.4cm} =\Tr\{(\phi_n-\chi_{_{\Omega}})(X)\left( \left[\mathcal{J}(\psi)(Y),X\right]\right)\},
 \end{split}
\end{equation*}
which converges to zero as $n\longrightarrow \infty$ since $\phi_n(X)$ converges strongly
to $\chi_{_{\Omega}}(X)$ and since $-\im \left[\mathcal{J}(\psi)(Y),X\right]
\in \mathcal{B}_1(\mathcal{H})$. Thus
\begin{equation}
 \begin{split}
\int_{\Omega}  \xi(t;\psi) dt  = \lim_{n\longrightarrow \infty} \int_a^b \phi_n(t)  \xi(t;\psi) dt
= \lim_{n\longrightarrow \infty} \int_a^b \psi(\lambda) \eta(\phi_n;\lambda) d\lambda.
 \end{split}
\end{equation}
Now 
\begin{equation*}
\begin{split}
 \int_a^b |\eta(\phi_n-\chi_{_{\Omega}});\lambda| d\lambda \leq 
 \left\|-\im \left[Y,\mathcal{J}(\phi_n-\chi_{_{\Omega}})(X)\right]\right\|_1
 \leq \Tr\{(|\phi_n-\chi_{_{\Omega}})|)(X) D^2\},
\end{split}
\end{equation*}
which converges to zero as $n\longrightarrow \infty$ by the same reasoning exactly as above, and therefore 
\begin{equation}
 \lim_{n\longrightarrow \infty} \int_a^b \psi(\lambda) \eta (\phi_n;\lambda) d\lambda = 
 \int_a^b \psi(\lambda) \eta(\chi_{_{\Omega}};\lambda) d\lambda.
\end{equation}
In the equality 
\begin{equation*}
\int_{\Omega} \xi(t;\psi_m) dt = \int_a^b \psi_m(\lambda) \eta (\chi_{_{\Omega}}; \lambda) d\lambda 
\end{equation*}
with $\{\psi_m\}$ approximating $\chi_{_{\Delta}}$ as described earlier, we get required equality \eqref{useq73}. 
This completes the proof of $\textup(i)$.
\vspace{0.1in}

\noindent \textup{(ii)} From the equality, for $\Omega \in Borel (\sigma(X))$,
$\Delta \in Borel (\sigma(Y)),$ 
\[
 \int_{\Omega} \xi(t;\Delta) dt = \int_{\Delta} \eta(\Omega; \lambda) d\lambda
\]
with $\xi$ and $\eta$ both non-negative, it follows that
\[
 \textup{Lebesgue measure}(\Omega) =0 \Longrightarrow \eta(\Omega;\lambda) = 0 \quad \text{and similarly}
\]
\[
 \textup{Lebesgue measure}(\Delta) =0 \Longrightarrow \xi(t;\Delta) = 0, \quad \text{and clearly }
\]
for fixed $t,\lambda$; $\xi(t;\cdot)$ and $\eta(\cdot; \lambda)$ are countably additive point-wise set 
functions. Therefore they are both absolutely continuous with respect to the respective Lebesgue measures, and we set
\[
 r(t,\lambda) = \frac{\xi(t;d\lambda)}{d\lambda} = \frac{\eta(dt;\lambda)}{dt} \geq 0.
\]
The uniqueness of $r$ follows from the equation \eqref{useq53}  and the fact that $r\in L^1([a,b]^2)$ and that it has compact support.
This completes the proof.
\end{proof}
\noindent Next, we want to compute the trace of 
\begin{equation*}
 \Tr\{[\alpha(X)\psi(Y),\phi(X)]\} = \Tr\{\alpha(X)[\psi(Y),\phi(X)]\} 
\end{equation*}
and by symmetry between $X$ and $Y$,~$
 \Tr\{[\alpha(X)\psi(Y),\beta(Y)]\} = -\Tr\{\psi(Y)[\beta(Y),\alpha(X)]\},$
where $\phi, \psi, \alpha, \beta \in \mathcal{P}([a,b]),$ which 
constitutes the next theorem.
\begin{thm}\label{thm2}
Let $T$ satisfy $\textbf{(A)}$.
Let $\phi,\psi,\alpha,\beta \in \mathcal{P}([a,b]).$
Then $[\alpha(X)\psi(Y),\phi(X)] \in \mathcal{B}_1(\mathcal{H})$ and 
\begin{equation}
\begin{split}
 & -\im \Tr\{[\alpha(X)\psi(Y),\phi(X)]\} =
  \int_{[a,b]^2}\alpha(t)\phi'(t)\psi'(\lambda)r(t,\lambda)dtd\lambda\\
 & =\int_{[a,b]^2} -J(\alpha \psi, \phi)(t,\lambda)r(t,\lambda)dtd\lambda,
\end{split}
 \end{equation}
where $r$ is the function obtained in theorem~\ref{thm1} and 
\[J(\alpha \psi, \phi)(t,\lambda)= 
\frac{\partial}{\partial t}(\alpha(t) \psi(\lambda))\frac{\partial}{\partial \lambda}(\phi(t))
-\frac{\partial}{\partial \lambda}(\alpha(t) \psi(\lambda))\frac{\partial}{\partial t}(\phi(t))\]
is the Jacobian of  $\alpha \psi$ and $\phi $ in $[a,b]\times [a,b]\equiv [a,b]^2$.
Similarly, $[\alpha(X)\psi(Y),\beta(Y)]\in \mathcal{B}_1(\mathcal{H})$ and
\begin{equation}
 \begin{split}
& -\im \Tr\{[\alpha(X)\psi(Y),\beta(Y)]\}=
 \int_{[a,b]^2}-\alpha'(t)\psi(\lambda)\beta'(\lambda)r(t,\lambda)dtd\lambda \\
& =\int_{[a,b]^2} -J(\alpha \psi, \beta)(t,\lambda)r(t,\lambda)dtd\lambda,
\end{split}
\end{equation}
where $r$ is the function obtained in theorem~\ref{thm1} and 
\[J(\alpha \psi, \phi)(t,\lambda)= 
\frac{\partial}{\partial t}(\alpha(t) \psi(\lambda))\frac{\partial}{\partial \lambda}(\beta(\lambda))
-\frac{\partial}{\partial \lambda}(\alpha(t) \psi(\lambda))\frac{\partial}{\partial t}(\beta(\lambda))\]
is the Jacobian of  $\alpha \psi$ and $\beta $ in $[a,b]^2$.
\end{thm}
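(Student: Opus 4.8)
The plan is to reduce everything to Theorem~\ref{thm1}, in particular to the bilinear trace formula \eqref{useq53}, by exploiting the fact that $\alpha(X)$ commutes with $\phi(X)$ and that the inner commutator structure is already understood. First I would establish the trace-class claim: since $\alpha, \psi, \phi$ are polynomials and $[\psi(Y),\phi(X)] \in \mathcal{B}_1(\mathcal{H})$ by Theorem~\ref{thm3} (or directly by Lemma~\ref{lma1} together with the polynomial expansion in \eqref{useq70}), the product $\alpha(X)[\psi(Y),\phi(X)]$ is trace class because $\mathcal{B}_1(\mathcal{H})$ is an ideal in $\mathcal{B}(\mathcal{H})$; and the Leibniz identity $[\alpha(X)\psi(Y),\phi(X)] = \alpha(X)[\psi(Y),\phi(X)] + [\alpha(X),\phi(X)]\psi(Y) = \alpha(X)[\psi(Y),\phi(X)]$ since $[\alpha(X),\phi(X)]=0$. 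This gives the first displayed equality $-\im \Tr\{[\alpha(X)\psi(Y),\phi(X)]\} = -\im\Tr\{\alpha(X)[\psi(Y),\phi(X)]\}$ for free.

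Next I would compute $-\im\Tr\{\alpha(X)[\psi(Y),\phi(X)]\}$. Writing $\alpha(X)\phi(X) = \mu(X)$ with $\mu = \alpha\phi \in \mathcal{P}([a,b])$ is tempting but does not immediately help because $\alpha(X)[\psi(Y),\phi(X)] \neq [\psi(Y),\alpha(X)\phi(X)] - (\text{correction})$ in a clean way; instead I would expand $\phi(X) = \sum_j c_j X^j$, use $[\psi(Y),X^j] = \sum_{k=0}^{j-1} X^k[\psi(Y),X]X^{j-k-1}$ exactly as in \eqref{useq70}, multiply by $\alpha(X)$ on the left, take the trace, and use cyclicity to collapse the inner sum, obtaining $-\im\Tr\{\alpha(X)[\psi(Y),\phi(X)]\} = -\im\Tr\{\alpha(X)\phi'(X)[\psi(Y),X]\} = \Tr\{\alpha(X)\phi'(X)\,(-\im[\psi(Y),X])\}$, in the same spirit as \eqref{useq71}. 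Now $\alpha\phi' \in \mathcal{P}([a,b])$, and by Theorem~\ref{thm1}(i)--(iii) applied with the pair $(\psi, \alpha\phi')$ — more precisely, by \eqref{useq72} with $\psi$ in the first slot and the continuous function $\alpha\phi'$ in the second slot, combined with the Radon–Nikodym identity $\xi(t;\psi)\,dt = \big(\int \psi'(\lambda) r(t,\lambda)\,d\lambda\big)dt$ from part (ii) — the right-hand side equals $\int_{[a,b]^2} \alpha(t)\phi'(t)\psi'(\lambda) r(t,\lambda)\,dt\,d\lambda$. This is the middle expression in the theorem.

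The last equality is then a purely algebraic rewriting: one computes the Jacobian directly. Since $\alpha(t)\psi(\lambda)$ depends on both variables while $\phi(t)$ depends only on $t$, one has $\frac{\partial}{\partial\lambda}(\phi(t)) = 0$ and $\frac{\partial}{\partial t}(\phi(t)) = \phi'(t)$, so $J(\alpha\psi,\phi)(t,\lambda) = -\frac{\partial}{\partial\lambda}(\alpha(t)\psi(\lambda))\cdot\phi'(t) = -\alpha(t)\psi'(\lambda)\phi'(t)$, whence $-J(\alpha\psi,\phi)(t,\lambda) = \alpha(t)\phi'(t)\psi'(\lambda)$, matching the integrand. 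The second half of the theorem, concerning $[\alpha(X)\psi(Y),\beta(Y)]$, is handled by the symmetric argument: $[\alpha(X)\psi(Y),\beta(Y)] = [\alpha(X),\beta(Y)]\psi(Y) + \alpha(X)[\psi(Y),\beta(Y)]$; but here both terms survive. However, $\alpha(X)[\psi(Y),\beta(Y)] = 0$ is false in general, so instead I would use $[\alpha(X)\psi(Y),\beta(Y)] = [\alpha(X),\beta(Y)]\psi(Y) + \alpha(X)\,\psi(Y)\beta(Y) - \alpha(X)\beta(Y)\psi(Y)$; the cleaner route is to note $-\Tr\{[\alpha(X)\psi(Y),\beta(Y)]\} = \Tr\{\psi(Y)[\beta(Y),\alpha(X)]\}$ as stated in the text (using cyclicity, since $\psi(Y)$ and $\beta(Y)$ commute), then expand $\alpha(X)=\sum_j d_j X^j$, commute past $\beta(Y)$ using the Lemma~\ref{lma1}-type expansion with $X$ and $Y$ interchanged, and collapse via cyclicity to $\Tr\{\psi(Y)\beta'(Y)\,(-\im[Y,\alpha(X)])\}\cdot(\pm\im)$; applying \eqref{useq72} in the other order with $(\alpha, \psi\beta')$ and the same $r$ gives $\int_{[a,b]^2}(-\alpha'(t))\psi(\lambda)\beta'(\lambda) r(t,\lambda)\,dt\,d\lambda$, and the Jacobian check is analogous since $\beta(\lambda)$ depends only on $\lambda$.

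The main obstacle is bookkeeping: keeping the factors of $\im$, the signs coming from $[Y,X]=-\im D^2$ versus $[X,Y]=\im D^2$, and the direction of the two integration-order identities ($\xi$ versus $\eta$) all consistent, so that the Jacobian $-J$ comes out with exactly the right sign. There is no genuine analytic difficulty beyond what Theorems~\ref{thm3} and \ref{thm1} already provide; the only point requiring a word of care is that $\alpha\phi'$ and $\alpha'\psi$ (and $\psi\beta'$) are again polynomials, so one stays inside $\mathcal{P}([a,b])$ and the extension machinery of Theorem~\ref{thm1} is not even needed — the polynomial version \eqref{useq71}–\eqref{useq50} suffices, with \eqref{useq53} invoked only at the very end to convert the one-variable density $\xi(\cdot;\psi)$ into the two-variable density $r$.
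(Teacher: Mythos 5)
Your proposal is correct and follows essentially the same route as the paper: reduce $[\alpha(X)\psi(Y),\phi(X)]$ to $\alpha(X)[\psi(Y),\phi(X)]$, collapse via cyclicity to $\Tr\{\alpha(X)\phi'(X)(-\im[\psi(Y),X])\}$, and then feed the continuous function $\alpha\phi'$ into the extended formula of Theorem~\ref{thm1} together with the Radon--Nikodym identity for $r$ (the paper packages this last step as the explicit density statement \eqref{useq31} for the measure $\Tr(E^{(X)}(dt)\{-\im[\psi(Y),X]\})$, which is the same thing). One aside in your second half is wrong, though harmless: $\alpha(X)[\psi(Y),\beta(Y)]$ is \emph{not} ``nonzero in general''---it vanishes identically since $\psi(Y)$ and $\beta(Y)$ commute, which is precisely why the Leibniz expansion reduces to $[\alpha(X),\beta(Y)]\psi(Y)$ and yields the trace identity you then (correctly) use.
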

\begin{proof}
Using \eqref{useq13} we say that 
\[
 [\alpha(X)\psi(Y),\phi(X)] = \alpha(X)[\psi(Y),\phi(X)]\in \mathcal{B}_1(\mathcal{H}).
\]
Next from \eqref{useq70} we conclude for $\psi,\phi\in \mathcal{P}([a,b])$ that 
\begin{equation}\label{useq27}
 \begin{split}
  -\im \Tr\{[\psi(Y),\phi(X)]\} = -\im \Tr \{\phi'(X) [\psi(Y),X]\}
  = \int_a^b \phi'(t) \Tr\left(E^{(X)}(dt)\{-\im [\psi(Y),X]\}\right),
 \end{split}
\end{equation}
where we have used spectral theorem for the self-adjoint operator $X$ and 
$E^{(X)} (.)$ is the spectral family of  $X$. On the other hand from theorem~\ref{thm1}$(iii)$ we conclude that
\begin{equation}\label{useq28}
 -\im \Tr\{[\psi(Y),\phi(X)]\} = \int_{[a,b]^2}
 \phi'(t)\psi'(\lambda)r(t,\lambda) dt d\lambda,
\end{equation}
for $\psi,\phi\in \mathcal{P}([a,b])$. Therefore combining \eqref{useq27} and \eqref{useq28}
we get
\begin{equation}\label{useq29}
 \int_a^b \phi'(t) \Tr\left(E^{(X)}(dt)\{-\im [\psi(Y),X]\}\right)
 = \int_a^b \phi'(t) \left(\int_a^b \psi'(\lambda) r(t,\lambda) d\lambda\right) dt, 
\end{equation}
for $\psi,\phi\in \mathcal{P}([a,b])$. Since 
\[
 \Delta \longrightarrow \Tr\left(E^{(X)}(\Delta)\{-\im [\psi(Y),X]\}\right)\quad \
 (\Delta \subseteq \mathbb{R},~ \text{a Borel subset of}~ \mathbb{R})
\]
is a complex measure with finite total variation and $r\in L^1{[a,b]^2}$, then we can extend 
the above equality \eqref{useq29} to all $\phi\in C([a,b])$ and therefore
\begin{equation}\label{useq30}
 \begin{split}
 & \int_a^b \phi(t) \Tr\left(E^{(X)}(dt)\{-\im [\psi(Y),X]\}\right)\\
 & \hspace{1.5in} 
 = \int_a^b \phi(t) \left(\int_a^b \psi'(\lambda) r(t,\lambda) d\lambda\right) dt
 \quad \text{for all}\quad
 \phi\in C([a,b]).
\end{split}
 \end{equation}
Thus by approximating characteristic function $\chi_{_{\Delta}}$ (for  Borel subset 
$\Delta \subseteq \mathbb{R}$) through continuous functions we conclude from
the above equation \eqref{useq30}
that
\begin{equation}
\Tr\left(E^{(X)}(\Delta)\{-\im [\psi(Y),X]\}\right)
 = \int_{t\in \Delta} dt \left(\int_a^b \psi'(\lambda) r(t,\lambda) d\lambda\right), 
\end{equation}
which shows that the measure
\[
  \Delta \longrightarrow \Tr\left(E^{(X)}(\Delta)\{-\im [\psi(Y),X]\}\right)
\]
is absolutely continuous with respect to a Lebesgue measure $dt$ and
\begin{equation}\label{useq31}
 \Tr\left(E^{(X)}(dt)\{-\im [\psi(Y),X]\}\right)
 = \left(\int_a^b \psi'(\lambda) r(t,\lambda) d\lambda\right) dt. 
\end{equation}
As in \eqref{useq71}, a similar computation with $\psi, \phi \in \mathcal{P}([a,b])$ and if
$\phi(\lambda) = \sum\limits_{j=0}^nb_j\lambda^j$ leads to
\begin{equation}\label{useq32}
 \begin{split}
& -\im \Tr\{[\alpha(X)\psi(Y),\phi(X)]\} = -\im \Tr\{\alpha(X)[\psi(Y),\phi(X)]\}
= -\im \Tr\{\alpha(X)\sum_{j=0}^nb_j[\psi(Y),X^j]\}\\
& = -\im \sum_{j=0}^nb_j \sum_{k=0}^{j-1}\Tr\{\alpha(X) X^k[\psi(Y),X]X^{j-k-1}\}
=-\im \sum_{j=0}^nb_j \sum_{k=0}^{j-1}\Tr\{\alpha(X) X^{j-1}[\psi(Y),X]\}\\
& = -\im \Tr\{ \alpha(X) \sum_{j=1}^n j b_j X^{j-1}[\psi(Y),X]\}
= -\im \Tr\{ \alpha(X) \phi'(X)[\psi(Y),X]\}\\
& = -\im \Tr\{\int_a^b \alpha(t) \phi'(t) E^{(X)}(dt)[\psi(Y),X]\}
= \int_a^b \alpha(t) \phi'(t) \Tr\left(E^{(X)}(dt)\{-\im [\psi(Y),X]\}\right),
 \end{split}
\end{equation}
where for fifth equality we have used the cyclicity of trace and for eighth equality we 
have used the spectral theorem for the self-adjoint operator $X$ and 
$E^{(X)} (.)$ is the spectral family of  $X$. Next by combining \eqref{useq31}
and \eqref{useq32} we conclude that 
\begin{equation*}
 \begin{split}
  & -\im \Tr\{[\alpha(X)\psi(Y),\phi(X)]\} = 
   \int_a^b \alpha(t) \phi'(t) \left(\int_a^b \psi'(\lambda) r(t,\lambda) d\lambda\right) dt\\
   & =\int_{[a,b]^2}\alpha(t)\phi'(t)\psi'(\lambda)r(t,\lambda)dtd\lambda
   =\int_{[a,b]^2} -J(\alpha \psi, \phi)(t,\lambda)r(t,\lambda)dtd\lambda.
 \end{split}
\end{equation*}
Next by interchanging the role of $X$ and $Y$ in the above calculations, we get that
\begin{equation*}
 \begin{split}
& -\im \Tr\{[\alpha(X)\psi(Y),\beta(Y)]\}=
 \int_{[a,b]^2}-\alpha'(t)\psi(\lambda)\beta'(\lambda)r(t,\lambda)dtd\lambda \\
& =\int_{[a,b]^2} -J(\alpha \psi, \beta)(t,\lambda)r(t,\lambda)dtd\lambda.
\end{split}
\end{equation*}
This completes the proof.
\end{proof}
\begin{rmrk}
 If $T$ satisfy $\textbf{(A)}$, then the conclusion of the above theorem \ref{thm2}
 also can be achieved for $\phi,\psi,\alpha,\beta \in C_1^1([a,b]).$
\end{rmrk}
\noindent The next theorem replaces effectively the so called 
\textquotedblleft Wallach's Collapse Theorem"
\cite{martinputinar}.
\begin{thm}\label{Wthm}
Let $T$ be as in the statement of theorem~\ref{thm2}.
Let $\phi,\psi, \alpha,\beta \in \mathcal{P}([a,b])$. 
Then the following is true
\begin{equation*}
 -\im \Tr \{[\alpha(X) \psi(Y),\phi(X) \beta(Y)]\}
 = \int_{[a,b]^2} -J(\alpha \psi, \phi \beta)(t,\lambda) r(t,\lambda) dt d\lambda,
\end{equation*}
where 
\[
J(\alpha \psi, \phi \beta)(t,\lambda) = 
\frac{\partial}{\partial t}(\alpha(t) \psi(\lambda))
\frac{\partial}{\partial \lambda}(\phi(t) \beta(\lambda))
-\frac{\partial}{\partial \lambda}(\alpha(t) \psi(\lambda))
\frac{\partial}{\partial t}(\phi(t) \beta(\lambda))
\]
is the Jacobian of  $\alpha \psi$ and $\phi \beta$ in $[a,b]^2$.
\end{thm}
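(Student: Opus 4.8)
The plan is to reduce the statement to Theorem~\ref{thm2} by a single algebraic commutator identity, so that no new operator‑integral estimates are needed. Write $P=\alpha(X)$, $Q=\psi(Y)$, $R=\phi(X)$, $S=\beta(Y)$; then $[P,R]=0$ (both are functions of $X$) and $[Q,S]=0$ (both are functions of $Y$), so that $QS=(\psi\beta)(Y)$ and $PR=(\alpha\phi)(X)$. The first step is to check, by expanding both sides and using only the cancellations $RPQS=PRQS$ and $SRPQ=SPRQ$ (both consequences of $PR=RP$), the identity
\[
[PQ,RS]=[PQS,R]+[PRQ,S]+[PQ,[R,S]],
\]
that is,
\[
[\alpha(X)\psi(Y),\phi(X)\beta(Y)]=[\alpha(X)(\psi\beta)(Y),\phi(X)]+[(\alpha\phi)(X)\psi(Y),\beta(Y)]+[\alpha(X)\psi(Y),[\phi(X),\beta(Y)]].
\]
By Theorem~\ref{thm3} the commutator $[\phi(X),\beta(Y)]$ lies in $\mathcal{B}_1(\mathcal{H})$, so the last term $[\alpha(X)\psi(Y),[\phi(X),\beta(Y)]]$ is a commutator of the bounded operator $\alpha(X)\psi(Y)$ with a trace‑class operator; hence it is trace class and its trace vanishes. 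Since Theorem~\ref{thm2} gives the first two terms on the right in $\mathcal{B}_1(\mathcal{H})$ as well, the identity simultaneously yields $[\alpha(X)\psi(Y),\phi(X)\beta(Y)]\in\mathcal{B}_1(\mathcal{H})$ and, on taking traces,
\[
\Tr\{[\alpha(X)\psi(Y),\phi(X)\beta(Y)]\}=\Tr\{[\alpha(X)(\psi\beta)(Y),\phi(X)]\}+\Tr\{[(\alpha\phi)(X)\psi(Y),\beta(Y)]\}.
\]

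Next I would substitute into Theorem~\ref{thm2}, legitimately since $\psi\beta$ and $\alpha\phi$ are again polynomials on $[a,b]$. Its first formula, with $\psi$ replaced by $\psi\beta$, gives
\[
-\im\,\Tr\{[\alpha(X)(\psi\beta)(Y),\phi(X)]\}=\int_{[a,b]^2}\alpha(t)\phi'(t)(\psi\beta)'(\lambda)\,r(t,\lambda)\,dt\,d\lambda,
\]
and its second formula, with $\alpha$ replaced by $\alpha\phi$, gives
\[
-\im\,\Tr\{[(\alpha\phi)(X)\psi(Y),\beta(Y)]\}=-\int_{[a,b]^2}(\alpha\phi)'(t)\psi(\lambda)\beta'(\lambda)\,r(t,\lambda)\,dt\,d\lambda.
\]
Adding these, and expanding $(\psi\beta)'=\psi'\beta+\psi\beta'$, $(\alpha\phi)'=\alpha'\phi+\alpha\phi'$, the two contributions proportional to $\alpha(t)\phi'(t)\psi(\lambda)\beta'(\lambda)$ cancel, leaving
\[
-\im\,\Tr\{[\alpha(X)\psi(Y),\phi(X)\beta(Y)]\}=\int_{[a,b]^2}\big(\alpha(t)\phi'(t)\psi'(\lambda)\beta(\lambda)-\alpha'(t)\phi(t)\psi(\lambda)\beta'(\lambda)\big)\,r(t,\lambda)\,dt\,d\lambda.
\]
Finally, computing the four partial derivatives in the definition of $J$ shows $J(\alpha\psi,\phi\beta)(t,\lambda)=\alpha'(t)\phi(t)\psi(\lambda)\beta'(\lambda)-\alpha(t)\phi'(t)\psi'(\lambda)\beta(\lambda)$, so the bracketed integrand above is precisely $-J(\alpha\psi,\phi\beta)(t,\lambda)\,r(t,\lambda)$, which is the asserted identity.

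The only part requiring real care is the verification of the commutator identity together with the observation that $\Tr\{[\alpha(X)\psi(Y),[\phi(X),\beta(Y)]]\}=0$; once these are in hand the rest is substitution into Theorem~\ref{thm2} and elementary algebra. The pitfall to avoid is trying to split $\Tr\{[\alpha(X)\psi(Y),\phi(X)\beta(Y)]\}$ into four separate traces at the outset: operators such as $\alpha(X)\psi(Y)\phi(X)\beta(Y)$ are not trace class, and the splitting becomes legitimate only after the identity has rewritten the commutator as a sum of operators that are each individually trace class. Heuristically, the hypotheses $[\phi(X),\alpha(X)]=0$ and $[\psi(Y),\beta(Y)]=0$ perform the entire ``collapse'' by themselves, which is why this statement can stand in for Wallach's collapse theorem.
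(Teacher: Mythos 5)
Your proof is correct and follows essentially the same route as the paper: the same decomposition $[\alpha(X)\psi(Y),\phi(X)\beta(Y)]=[\alpha(X)(\psi\beta)(Y),\phi(X)]+[(\alpha\phi)(X)\psi(Y),\beta(Y)]+[\alpha(X)\psi(Y),[\phi(X),\beta(Y)]]$, the same observation that the last term is a commutator of a bounded operator with the trace-class operator $[\phi(X),\beta(Y)]$ and hence has zero trace, and the same substitution of $\psi\beta$ and $\alpha\phi$ into Theorem~\ref{thm2} followed by the cancellation of the $\alpha\phi'\psi\beta'$ terms.
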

\begin{proof}
By simple computation we get the following
\begin{equation*}
\begin{split}
& [\alpha(X) \psi(Y),\phi(X) \beta(Y)]- \{
[\alpha(X)(\psi \beta)(Y), \phi(X)]+[(\alpha \phi)(X)\psi(Y),\beta(Y)]\}\\
& = \alpha(X) \psi(Y) \phi(X) \beta(Y) - \phi(X) \beta(Y) \alpha(X) \psi(Y) 
- \alpha(X) \psi(Y) \beta(Y) \phi(X) \\
& + \beta(Y) \alpha(X) \phi(X) \psi(Y)\\
& = \alpha(X) \psi(Y) [\phi(X), \beta(Y) ] - [\phi(X), \beta(Y) ] \alpha(X) \psi(Y)
\in \mathcal{B}_1(\mathcal{H}),
\end{split}
\end{equation*}
by equation \eqref{useq13} and therefore  
\begin{equation}
\begin{split} 
& \Tr\{[\alpha(X) \psi(Y),\phi(X) \beta(Y)]- (
[\alpha(X)(\psi \beta)(Y), \phi(X)]+[(\alpha \phi)(X)\psi(Y),\beta(Y)]) \}\\
& = \Tr \{\alpha(X) \psi(Y) [\phi(X), \beta(Y) ] - [\phi(X), \beta(Y) ] \alpha(X) \psi(Y) \} = 0,
\end{split}
\end{equation}
where we have used the cyclicity of trace and the fact that 
$[\phi(X), \beta(Y) ] \in \mathcal{B}_1(\mathcal{H})$.
Thus we have shown that 
\begin{equation}\label{useq9}
 -\im \Tr\{[\alpha(X) \psi(Y),\phi(X) \beta(Y)]\}
 = -\im \Tr\{[\alpha(X)(\psi \beta)(Y), \phi(X)]\} -\im \Tr\{ [(\alpha \phi)(X)\psi(Y),\beta(Y)]\}.
\end{equation}
Therefore by using theorem~\ref{thm2} we compute the right hand side of \eqref{useq9} to get
\begin{equation}\label{useq10}
\begin{split}  
 &-\im \Tr\{[\alpha(X)(\psi \beta)(Y), \phi(X)]\} -\im \Tr\{ [(\alpha \phi)(X)\psi(Y),\beta(Y)]\}\\
 & = \int_{[a,b]^2} \alpha(t) \phi'(t)(\psi \beta)'(\lambda) r(t,\lambda)dt d\lambda
 - \int_{[a,b]^2} (\alpha \phi)'(t) \psi(\lambda)\beta'(\lambda) r(t,\lambda)dt d\lambda\\
 & = \int_{[a,b]^2} -J(\alpha \psi, \phi \beta)(t,\lambda) r(t,\lambda) dt d\lambda.
\end{split}
\end{equation} 
Therefore combining \eqref{useq9} and \eqref{useq10} we get
\begin{equation*}
 -\im \Tr\{[\alpha(X) \psi(Y),\phi(X) \beta(Y)]\}
 = \int_{[a,b]^2} -J(\alpha \psi, \phi \beta)(t,\lambda) r(t,\lambda) dt d\lambda.
\end{equation*}
This completes the proof.
\end{proof}
Now we are in a position to state our main result, the Helton-Howe-Carey-Pincus trace formula \cite{alexpeller, clancybook, heltonhowe73,heltonhowe75,martinputinar}.
\begin{thm}
 Let $\Psi(t,\lambda) = \sum\limits_{j=1}^n c_j \alpha_j(t) \psi_j(\lambda)$
 and $\Phi(t,\lambda) = \sum\limits_{k=1}^m d_j \phi_k(t) \beta_k(\lambda),$
 ($m, n$ $\in \mathbb{N}$) and $\alpha_j,$ $\psi_j,$ $\phi_j,$ $\beta_j$ are
 all in $\mathcal{P}([a,b])$. Then $-\im \left[\Psi(X,Y), \Phi(X,Y)\right]$
 $\in \mathcal{B}_1(\mathcal{H})$ and 
 \[
  \Tr\{-\im \left[\Psi(X,Y), \Phi(X,Y)\right]\}
  = \int_{[a,b]^2} J(\Psi,\Phi)(t,\lambda) r(t,\lambda) dt d\lambda.
 \]

\end{thm}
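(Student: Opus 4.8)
The plan is to deduce the formula directly from Theorem~\ref{Wthm} by bilinearity, since the rank‑one case $\Psi=\alpha\psi$, $\Phi=\phi\beta$ has already been handled there. Recall that the symbolic calculus here sends $\Psi(t,\lambda)=\sum_{j=1}^n c_j\alpha_j(t)\psi_j(\lambda)$ to $\Psi(X,Y)=\sum_{j=1}^n c_j\alpha_j(X)\psi_j(Y)$, with $X$ placed to the left of $Y$, and likewise for $\Phi$.

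First I would expand the commutator bilinearly,
\[
 [\Psi(X,Y),\Phi(X,Y)] \;=\; \sum_{j=1}^n\sum_{k=1}^m c_j d_k\,[\alpha_j(X)\psi_j(Y),\phi_k(X)\beta_k(Y)].
\]
By Theorem~\ref{Wthm} each bracket on the right lies in $\mathcal{B}_1(\mathcal{H})$, and since $\mathcal{B}_1(\mathcal{H})$ is a linear subspace this already gives $-\im[\Psi(X,Y),\Phi(X,Y)]\in\mathcal{B}_1(\mathcal{H})$, i.e.\ the first assertion. Taking the trace, using its linearity, and applying Theorem~\ref{Wthm} to each pair $(\alpha_j\psi_j,\phi_k\beta_k)$ yields
\[
 \Tr\{-\im[\Psi(X,Y),\Phi(X,Y)]\}
 \;=\; \sum_{j=1}^n\sum_{k=1}^m c_j d_k\int_{[a,b]^2}\!\big(-J(\alpha_j\psi_j,\phi_k\beta_k)\big)(t,\lambda)\,r(t,\lambda)\,dt\,d\lambda,
\]
with $r$ the non‑negative $L^1([a,b]^2)$ principal function of Theorem~\ref{thm1}.

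It then remains only to reassemble the sum of Jacobians. The elementary point is that $J(F,G)=\partial_tF\,\partial_\lambda G-\partial_\lambda F\,\partial_tG$ is bilinear in the pair $(F,G)$, so
\[
 \sum_{j=1}^n\sum_{k=1}^m c_j d_k\,J(\alpha_j\psi_j,\phi_k\beta_k)
 \;=\; J\!\Big(\sum_{j=1}^n c_j\alpha_j\psi_j,\ \sum_{k=1}^m d_k\phi_k\beta_k\Big)
 \;=\; J(\Psi,\Phi).
\]
Pulling the finite sum through the integral then produces the asserted identity. I do not expect a real obstacle: the analytic substance — trace‑class membership of each monomial commutator, the ``collapse'' identity reducing $[\alpha(X)\psi(Y),\phi(X)\beta(Y)]$ to single‑variable brackets, and the existence of $r$ — is entirely contained in Theorems~\ref{thm1}--\ref{Wthm}, and bilinearity of both the commutator and the Jacobian is exactly what makes the passage from rank‑one symbols to arbitrary finite sums automatic. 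The one step deserving care is the bookkeeping of signs and orientation when converting the $-J$ of Theorem~\ref{Wthm} into $J(\Psi,\Phi)$ in the statement; this should be tracked consistently with the convention fixed in Theorems~\ref{thm2}--\ref{Wthm}. Finally, by the remark following Theorem~\ref{thm2}, the same argument goes through verbatim with the factors $\alpha_j,\psi_j,\phi_k,\beta_k$ taken in $C_1^1([a,b])$ in place of $\mathcal{P}([a,b])$.
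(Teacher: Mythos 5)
Your proof is correct and essentially identical to the paper's: the paper likewise expands the commutator bilinearly into the $nm$ terms $[\alpha_j(X)\psi_j(Y),\phi_k(X)\beta_k(Y)]$, applies Theorem~\ref{Wthm} to each, and recombines using linearity of the trace and bilinearity of the Jacobian. On the sign you flagged: carried out literally, the bilinear summation of Theorem~\ref{Wthm} yields $\int_{[a,b]^2}\bigl(-J(\Psi,\Phi)\bigr)(t,\lambda)\,r(t,\lambda)\,dt\,d\lambda$, so the $+J(\Psi,\Phi)$ in the theorem's display is inconsistent with the $-J$ of Theorem~\ref{Wthm} --- a sign slip in the paper's statement rather than in your argument, and your instinct to track that convention explicitly was the right one.
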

\begin{proof}
Proof follows easily by applying theorem~\ref{Wthm} and the fact that
\[
 \Tr\{-\im \left[\Psi(X,Y), \Phi(X,Y)\right]\}
 = \sum\limits_{j=1}^n \sum\limits_{k=1}^m c_jd_k
 \Tr\{-\im \left[\alpha_j(X) \psi_j(Y), \phi_k(X) \beta_k(Y)\right]\}.
\]
\end{proof}

\noindent\textbf{Acknowledgement:} The first author is grateful to ISI, Bangalore Centre and IIT, Guwahati for 
warm hospitality and the second author
is grateful to Jawaharlal Nehru Centre for Advanced Scientific Research
and SERB-Distinguished Fellowship for support.


\end{document}